\def\multiset#1#2{\ensuremath{\left(\kern-.3em\left(\genfrac{}{}{0pt}{}{#1}{#2}\right)\kern-.3em\right)}}
\newtheorem{theorem}{Theorem}
\newtheorem{lemma}[theorem]{Lemma}
\newtheorem{corollary}[theorem]{Corollary}
\newtheorem{proposition}[theorem]{Proposition}
\theoremstyle{definition}
\theoremstyle{remark}
\title[Embeddings of quadratic  spaces over the field of $p$-adic numbers]
{Embeddings of quadratic  spaces over the field of $p$-adic numbers}
\author{Semin Yoo}
\date{\today}
\address{Department of Mathematics, University of Rochester, Rochester, NY, USA}
\email{syoo19@ur.rochester.edu}
\subjclass[2010]{15A63}
\keywords{quadratic spaces over the field of $p$-adic numbers, embedding problem}
\begin{document}

\maketitle


\begin{abstract}
Nondegenerate quadratic forms over $p$-adic fields are classified by their dimension, discriminant, and Hasse invariant. This paper uses these three invariants, elementary facts about $p$-adic fields and the theory of quadratic forms to determine which types of quadratic spaces -- including degenerate cases -- can be embedded in the Euclidean $p$-adic space $(\mathbb{Q}_{p}^{n},x_{1}^{2}+\cdots+x_{n}^{2})$, and the Lorentzian space  $(\mathbb{Q}_{p}^{n},x_{1}^{2}+\cdots+x_{n-1}^{2}+\lambda x_{n}^{2})$, where $\mathbb{Q}_{p}$ is the field of $p$-adic numbers, and $\lambda$ is a nonsquare in the finite field $\mathbb{F}_{p}$. Furthermore, the minimum dimension $n$ that admits such an embedding is determined.
\end{abstract}

\section{Introduction}

Given two sets $X$ and $Y$ with some extra structure, e.g. topology, group multiplication and so on, an \textbf{embedding} $f:X\hookrightarrow Y$ is an injective map that preserves the structure of $X$ in $Y$. For example, the Nash Embedding Theorem says that any Riemannian smooth manifold $X$ of dimension $n$ can be isometrically embedded into the Euclidean space $\mathbb{R}^{2n}$, $n>0$. For a given manifold $X$, it is interesting to find the smallest $k$ such that $X$ is embedded in $\mathbb{R}^{k}$.

\smallskip
In the same spirit as the Nash Embedding Theorem, we can decide which types of quadratic spaces can be isometrically embedded in the Euclidean $p$-adic space $(\mathbb{Q}_{p}^{n},x_{1}^{2}+\cdots+x_{n}^{2})$ and the Lorentzian $p$-adic space $(\mathbb{Q}_{p}^{n},x_{1}^{2}+\cdots+x_{n-1}^{2}+\lambda x_{n}^{2})$, where $\mathbb{Q}_{p}$ is the field of $p$-adic numbers, and $\lambda$ is a nonsquare in the finite field $\mathbb{F}_{p}$. Additionally, given a quadratic space $Y$, we determine a lower bound $n$ that allows to be $Y$ embedded in $(\mathbb{Q}_{p}^{n},x_{1}^{2}+\cdots+x_{n}^{2})$ and $(\mathbb{Q}_{p}^{n},x_{1}^{2}+\cdots+x_{n-1}^{2}+\lambda x_{n}^{2})$. Note that quadratic forms yield both nondegenerate and degenerate quadratic spaces. Over finite fields, nondegenerate quadratic forms are classified by the dimension and the discriminant $d$. Two nondegenerate quadratic spaces $(V_{1},Q_{1})$ and $(V_{2},Q_{2})$ over a finite field are isometrically isomorphic if and only if dim$V_{1}=$dim$V_{2}$ and $d(Q_{1})=d(Q_{2})$. By using these invariants together with some facts from the theory of finite fields and quadratic forms, the author in \cite{Yo} finds the embedding criteria for a given quadratic space to isometrically embedded in the Euclidean finite space $(\mathbb{F}_{q}^{n},x_{1}^{2}+\cdots+x_{n}^{2})$.

\smallskip
For quadratic spaces over $p$-adic fields, we have to consider one more invariant, called the Hasse-invariant $\epsilon$. Two nondegenerate quadratic spaces $(V_{1},Q_{1})$ and $(V_{2},Q_{2})$ over $\mathbb{Q}_{p}$ are isometrically isomorphic if and only if dim$V_{1}=\text{dim}V_{2}$, $d(Q_{1})=d(Q_{2})$ and $\epsilon(Q_{1})=\epsilon(Q_{2})$. By using these invariants together with some facts from the theory of $p$-adic fields and quadratic forms, we determine which quadratic spaces over $\mathbb{Q}_{p}$ can be embedded in the Euclidean $p$-adic space $(\mathbb{Q}_{p}^{n},x_{1}^{2}+\cdots+x_{n}^{2})$ and the Lorentzian $p$-adic space $(\mathbb{Q}_{p}^{n},x_{1}^{2}+\cdots+x_{n-1}^{2}+\lambda x_{n}^{2})$. Additionally, we find the minimum dimension $n$ that admits such an embedding.

\smallskip
We encounter difficulty in this embedding problem in the case of degenerate quadratic spaces. This difficulty addresses by the author for finite fields in \cite{Yo}. In this paper, the author addresses similar difficulties in the field of $p$-adic numbers, which require new techniques.

\smallskip
\subsection*{Acknowledgements.} 
The author would like to express gratitude to Jonathan Pakianathan for helpful discussions and encouragement for this work.

\medskip

\smallskip
\section{Preliminaries}

In this section, we provide some results of the theory of quadratic forms we will use later. For more information about the theory of quadratic forms, see expository papers such as \cite{Cl}, and \cite{Co}. Also, throughout this paper, we assume that any field we discuss is not characteristic $2$.

\smallskip
We first introduce some notation that we will use throughout the paper to express quadratic forms. Since any quadratic form can be diagonalized over any field, we denote a quadratic form of dimension $n$ by $Q=$diag$(a_{1},a_{2},\cdots,a_{n})$, where $a_{i}$ is the $i$-th element along the diagonal of the matrix representation of $Q$. If the element $a$ on the diagonal of the matrix representation repeats $k$ times, we denote it by $Q=$diag$(a^{k})$. We will use $Q$ as a shorthand notation for the corresponding quadratic space $(V,Q)$ only when there is no risk of confusing the quadratic form with the quadratic space.

\smallskip
We now state Propositions about isotropic spaces. Given an isotropic space, it is well-known that there exits an isometric embedding of the hyperbolic plane into the isotropic space. The next proposition is one direction of generalization. The proof can be found in \cite{Cl}. 

\begin{proposition}\cite{Cl}\label{hyper}
Let $(V,Q)$ be a nondegenerate quadratic space and $U \subset V$ a totally isotropic subspace with basis $u_{1},\cdots,u_{m}$. Then there exists a totally isotropic subspace $U'$, disjoint from $U$, with basis $u_{1}',\cdots.u_{m}'$ such that $B(u_{i},u_{j}')=\delta(i,j).$ In particular, $\left \langle U,U' \right \rangle \cong \bigoplus_{i=1}^{m} \mathbb{H}$.
\end{proposition}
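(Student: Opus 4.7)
The plan is to prove the proposition by induction on $m = \dim U$, and then read off the isometry $\langle U, U'\rangle \cong \bigoplus \mathbb{H}$ from the resulting Gram matrix.

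For the base case $m=1$, I would use nondegeneracy of $(V,Q)$ to pick a vector $v \in V$ with $B(u_1,v) \neq 0$. After rescaling I may assume $B(u_1,v)=1$. The vector $v$ need not be isotropic, but since $u_1$ is, the correction $u_1' := v - Q(v)\, u_1$ is still paired correctly against $u_1$ (because $B(u_1,u_1) = 2Q(u_1) = 0$), and a short computation using $Q(u_1)=0$ shows $Q(u_1')=0$. Moreover $u_1' \notin U = \langle u_1\rangle$ since $B(u_1,u_1')=1 \neq 0$, so the hyperbolic plane $\langle u_1, u_1'\rangle$ is precisely $\mathbb{H}$.

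For the inductive step, set $U_0 := \langle u_1,\dots,u_{m-1}\rangle$ and apply the induction hypothesis inside $V$ to produce $U_0' = \langle u_1',\dots,u_{m-1}'\rangle$ totally isotropic with $B(u_i,u_j') = \delta_{ij}$ for $i,j < m$. The subspace $W := U_0 \oplus U_0'$ is nondegenerate (its Gram matrix in the given basis is a block sum of $\left(\begin{smallmatrix}0&1\\1&0\end{smallmatrix}\right)$ blocks up to permutation), so $V = W \oplus W^\perp$ and $W^\perp$ is itself nondegenerate. The key structural observation is that $u_m$ can be adjusted, without changing $U$, to lie in $W^\perp$: writing $u_m = \alpha + \beta$ with $\alpha \in W$ and $\beta \in W^\perp$, expanding $\alpha = \sum a_i u_i + \sum b_i u_i'$, and using $B(u_j,u_m)=0$ for $j<m$ forces each $b_j = 0$, so $\alpha \in U_0$. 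Replacing $u_m$ with $\beta = u_m - \alpha$ keeps $\langle u_1,\dots,u_m\rangle = U$, and $Q(\beta)=0$ since $\alpha \in U_0$ is isotropic and orthogonal to $u_m$.

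Now $u_m$ is an isotropic vector of the nondegenerate space $W^\perp$, so the base case applied inside $W^\perp$ produces $u_m' \in W^\perp$ with $Q(u_m')=0$ and $B(u_m,u_m')=1$. Because $u_m' \in W^\perp$, automatically $B(u_i,u_m')=0$ and $B(u_i',u_m')=0$ for $i<m$, giving the required duality relations. Linear independence of $u_1',\dots,u_m'$ follows from $W \cap W^\perp = 0$, total isotropy of $U' := \langle u_1',\dots,u_m'\rangle$ is immediate from the relations already verified, and $U \cap U' = 0$ follows because any $x \in U \cap U'$ satisfies $B(u_j,x)=0$ (using $U$ totally isotropic) while simultaneously $B(u_j,x)$ reads off the coefficient of $u_j'$ in the $U'$-expansion. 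The Gram matrix of $u_1,u_1',\dots,u_m,u_m'$ is then the $2m \times 2m$ block-diagonal matrix of hyperbolic planes, proving $\langle U,U'\rangle \cong \bigoplus_{i=1}^m \mathbb{H}$.

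The main obstacle is the adjustment step placing $u_m$ into $W^\perp$ while preserving the span $U$; once this reduction to the base case is in place the rest is bookkeeping. A secondary point worth care is the base-case correction formula, which depends on the convention $B(x,x)=2Q(x)$ and so requires $\mathrm{char}\neq 2$, an assumption already in force in the paper.
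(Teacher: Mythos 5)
The paper itself gives no proof of this proposition---it simply defers to \cite{Cl}---and your induction on $m$ is exactly the standard argument found there: build the dual vector in the base case by correcting a non-orthogonal vector to an isotropic one, then split off the nondegenerate hyperbolic block $W=U_0\oplus U_0'$ and work in $W^{\perp}$. The base-case computation, the verification that $\alpha\in U_0$, the isotropy of $\beta$, and the final Gram-matrix bookkeeping are all correct.

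One loose end: after you replace $u_m$ by $\beta=u_m-\alpha$, the relations $B(u_i,u_j')=\delta_{ij}$ you establish are with respect to the \emph{modified} basis $u_1,\dots,u_{m-1},\beta$ of $U$, not the basis $u_1,\dots,u_m$ given in the statement; for the original $u_m=\alpha+\beta$ with $\alpha=\sum_{j<m}a_ju_j$ one gets $B(u_m,u_j')=a_j$, which need not vanish. This is trivially repaired---either replace $u_j'$ by $u_j'-a_ju_m'$ for $j<m$ (these still lie in the totally isotropic $U'$, so nothing else changes), or observe that $B$ restricts to a perfect pairing $U\times U'\to\mathbb{Q}_p$ and take the basis of $U'$ dual to the originally prescribed basis of $U$---but as written the induction does not quite deliver the stated normalization.
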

\begin{corollary}\cite{Cl}\label{0}
Let $W$ be a maximal totally isotropic subspace of a nondegenerate quadratic space $V$. Then dim$W \leq \frac{1}{2}$dim$V$. Equality holds if and only if $V$ is hyperbolic.
\end{corollary}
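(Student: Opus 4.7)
My plan is to apply Proposition~\ref{hyper} directly, with $U$ taken to be $W$ itself. If $W$ has basis $u_{1},\ldots,u_{m}$, the proposition produces a totally isotropic $W'$ disjoint from $W$ of the same dimension $m$, with $H:=\left\langle W,W'\right\rangle\cong\bigoplus_{i=1}^{m}\mathbb{H}$. Since $W\cap W'=0$, I get $\dim H=2m$, and $H\subseteq V$ immediately yields $2\dim W\le\dim V$. Maximality of $W$ is not actually used for this inequality; it is genuinely a bound on any totally isotropic subspace.

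For the equality statement, one direction is essentially free: if $\dim W=\tfrac{1}{2}\dim V$, then $\dim H=\dim V$, so $V=H\cong\bigoplus_{i=1}^{m}\mathbb{H}$ is hyperbolic by definition. For the converse, I would assume $V$ is hyperbolic and $W$ is a maximal totally isotropic subspace. Since $H$ is nondegenerate (an orthogonal sum of hyperbolic planes), I split $V=H\oplus H^{\perp}$ with $H^{\perp}$ nondegenerate. If $\dim W<\tfrac{1}{2}\dim V$, then $H^{\perp}\neq 0$, and by Witt's cancellation theorem applied to $V\cong H\oplus H^{\perp}$ with both $V$ and $H$ hyperbolic, $H^{\perp}$ is itself hyperbolic and so contains a nonzero isotropic vector $v$. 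Because $v\in H^{\perp}$ is orthogonal to every element of $W$, the span $W+\langle v\rangle$ is totally isotropic and strictly larger than $W$, contradicting maximality. Hence $\dim W=\tfrac{1}{2}\dim V$.

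The hard part is that last step: to force a maximal $W$ inside a hyperbolic $V$ to achieve the bound, I have to extract an isotropic vector from $H^{\perp}$, and the cleanest route is Witt cancellation, which is an ingredient beyond Proposition~\ref{hyper}. Everything else reduces to Proposition~\ref{hyper} plus the dimension identity $\dim H=2\dim W$ and basic orthogonal-complement bookkeeping in a nondegenerate space.
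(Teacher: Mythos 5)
Your proof is correct. The paper itself does not prove this corollary --- it defers to \cite{Cl} --- and your argument is essentially the standard one from that source: Proposition~\ref{hyper} applied to $W$ gives the inequality and the forward direction of the equality claim, while the converse (that a \emph{maximal} totally isotropic subspace of a hyperbolic space must attain the bound) correctly uses Witt cancellation on $V=H\oplus H^{\perp}$ to extract an isotropic vector orthogonal to $W$, contradicting maximality.
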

We rely on the two fundamental equivalent statements in the algebraic theory of quadratic forms, called \textbf{Witt's extension theorem} and \textbf{Witt's cancellation theorem}. The proofs can be found in \cite{Cl}.

\begin{theorem}[Witt's extension theorem]\label{we}\cite{Cl}
Let $X_{1},X_{2}$ be quadratic spaces and $X_{1} \cong X_{2}$, $X_{1}=U_{1} \oplus V_{1},X_{2}=U_{2}\oplus V_{2}$, and $f:V_{1} \longrightarrow V_{2}$ an isometry. Then there is an isomtery $F:X_{1}\longrightarrow X_{2}$ such that $F|_{V_{1}}=f$ and $F(U_{1})=U_{2}$.
\end{theorem}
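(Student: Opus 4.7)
The plan is to induct on $\dim V_1$, with the base case $\dim V_1 = 1$ handled by an explicit composition of hyperplane reflections inside the ambient space. Observe first that the hypothesis $X_1 = U_1 \oplus V_1$ (orthogonal direct sum) together with the nondegeneracy of $X_1$ forces $V_1 \cap V_1^\perp = V_1 \cap U_1 = 0$, so $V_1$ is automatically nondegenerate, and likewise for $V_2$. This is what makes the reflection formula below applicable.

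For the base case, let $v_1 \in V_1$ be anisotropic with $Q(v_1) = a \neq 0$ (which exists since $V_1$ is nondegenerate) and set $v_2 := f(v_1)$, so $Q(v_2) = a$. Identifying $X_2$ with $X_1$ via the given isomorphism, the polarization identity
\[
Q(v_1 - v_2) + Q(v_1 + v_2) \;=\; 2Q(v_1) + 2Q(v_2) \;=\; 4a \;\neq\; 0
\]
forces at least one of $v_1 \pm v_2$ to be anisotropic. If $w := v_1 - v_2$ works, the reflection $\tau_w(x) = x - \frac{2B(x,w)}{Q(w)}\, w$ is an isometry of $X_1$ carrying $v_1$ to $v_2$; if instead only $v_1 + v_2$ is anisotropic, then $\tau_{v_1+v_2}$ sends $v_1$ to $-v_2$, and a further application of $\tau_{v_2}$ (well-defined since $Q(v_2) = a \neq 0$) sends $-v_2$ back to $v_2$. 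Transporting through $X_1 \cong X_2$ produces an isometry $G: X_1 \to X_2$ with $G(v_1) = v_2$.

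For the inductive step, diagonalize $V_1 = \langle v_1 \rangle \perp V_1'$ with $v_1$ anisotropic, set $V_2' := f(V_1')$, and invoke the base case to produce $G$ with $G(v_1) = f(v_1)$. Then $h := G^{-1} \circ f$ fixes $v_1$, so $h(V_1') \subset \langle v_1\rangle^\perp$, and within the nondegenerate ambient space $\langle v_1\rangle^\perp$ one has the two orthogonal decompositions $\langle v_1\rangle^\perp = U_1 \oplus V_1'$ and $\langle v_1\rangle^\perp = G^{-1}(U_2) \oplus h(V_1')$. The inductive hypothesis, applied to the isometry $h|_{V_1'}: V_1' \to h(V_1')$, yields an isometry $F_0$ of $\langle v_1\rangle^\perp$ with $F_0|_{V_1'} = h|_{V_1'}$ and $F_0(U_1) = G^{-1}(U_2)$. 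Then $F := G \circ (\mathrm{id}_{\langle v_1\rangle} \oplus F_0)$ is the required extension: it restricts to $f$ on $V_1$ and sends $U_1$ to $U_2$.

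The step I expect to be most delicate is the base case, specifically the sign manipulation when $v_1 - v_2$ happens to be isotropic: one must exhibit a concrete isometry sending $v_1$ to $v_2$ (not merely to $\pm v_2$), which is why the auxiliary reflection $\tau_{v_2}$ is needed. The polarization identity is the key tool that guarantees at least one of $v_1 \pm v_2$ is usable, and the induction's insistence on peeling off an anisotropic one-dimensional summand is essential—without nondegeneracy of $V_1$, no such $v_1$ need exist, motivating a preliminary reduction (via Proposition \ref{hyper}) in the more general variant where $V_1$ is allowed to be degenerate.
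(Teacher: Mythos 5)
The paper itself gives no proof of this theorem --- it is quoted from \cite{Cl} with the remark that the proofs can be found there --- so there is nothing in-paper to compare against line by line. Your argument is the standard reflection-plus-induction proof from that source, and it is correct as far as it goes: the polarization identity $Q(v_1-v_2)+Q(v_1+v_2)=4a\neq 0$ (valid since the paper assumes characteristic $\neq 2$) does guarantee one of $v_1\pm v_2$ is anisotropic, the two-reflection trick correctly repairs the sign when only $v_1+v_2$ is usable, and the inductive step is assembled correctly ($F$ restricts to $f$ on $\langle v_1\rangle\oplus V_1'$ and carries $U_1$ to $U_2$ by construction). One small presentational point: your ``base case'' only produces $G$ with $G(v_1)=v_2$ and never checks $G(U_1)=U_2$; this is harmless because $U_i=V_i^{\perp}$ in a nondegenerate ambient space, so $F(U_1)=U_2$ is automatic for any isometry with $F(V_1)=V_2$ (equivalently, your induction really bottoms out at $\dim V_1'=0$, where $F_0=\mathrm{id}$ works). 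Note also that nondegeneracy of $X_1,X_2$ is not in the printed statement but is needed both here and for your claim that $V_1$ inherits nondegeneracy.

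The one substantive caveat is the scope of your induction: it requires an anisotropic vector in $V_1$ at every stage, hence requires $V_1$ nondegenerate, which you secure by reading $\oplus$ as orthogonal direct sum. But the paper's own applications of ``Witt's Theorem'' (in Theorem \ref{main} and Theorem \ref{main2}) move a \emph{totally isotropic} subspace $W$ onto a standard one $\left\langle x_1,\cdots,x_k\right\rangle$; that is precisely the case where the distinguished subspace is degenerate and its complement cannot be orthogonal to it. You name the correct fix in your closing sentence --- first enlarge the radical of $V_1$ to a sum of hyperbolic planes via Proposition \ref{hyper}, extend $f$ across those planes, and only then run the induction on the nondegenerate part --- but you do not carry it out. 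So what you have proved is the orthogonal-decomposition version of the theorem, not the more general version the paper actually invokes; completing that reduction is the remaining work.
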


\begin{theorem}[Witt's cancellation theorem]\label{wc}\cite{Cl}
Let $U_{1},U_{2},V_{1},V_{2}$ be quadratic spaces where $V_{1}$ and $V_{2}$ are isometrically isomorphic. If $U_{1}\oplus V_{1} \cong U_{2} \oplus V_{2}$, then $U_{1} \cong U_{2}$.
\end{theorem}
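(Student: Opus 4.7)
The plan is to deduce Witt's cancellation theorem directly from Witt's extension theorem (Theorem \ref{we}), which has just been stated. The hypotheses of the cancellation theorem package together precisely what Theorem \ref{we} takes as input: orthogonal decompositions $X_1 := U_1 \oplus V_1$ and $X_2 := U_2 \oplus V_2$, a global isometry $X_1 \cong X_2$, and an isometry $f : V_1 \to V_2$ between the matched summands (the latter is available because we assume $V_1 \cong V_2$).

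I would first invoke Theorem \ref{we} on these data. It produces an isometry $F : X_1 \to X_2$ satisfying $F|_{V_1} = f$ and $F(U_1) = U_2$. Next, I would observe that because $F$ is itself an isometry of ambient quadratic spaces and it carries the subspace $U_1$ onto the subspace $U_2$, the restriction $F|_{U_1} : U_1 \to U_2$ is automatically an isometric isomorphism of quadratic spaces, yielding $U_1 \cong U_2$. Since the only ingredients are one application of Theorem \ref{we} and a single restriction, the argument is essentially a one-liner.

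The main obstacle is hidden entirely inside Witt's extension theorem; once that result is in hand, cancellation is immediate. If one instead wanted a self-contained proof without invoking Theorem \ref{we}, the difficulty would lie in promoting the given isometry $f : V_1 \to V_2$ to a global isometry of the ambient spaces, which typically proceeds by induction on $\dim V_1$ together with repeated composition with suitable hyperplane reflections (and a separate treatment for hyperbolic or totally isotropic $V_i$, where reflections in anisotropic vectors are unavailable). Because Theorem \ref{we} has already absorbed that work, the present proof needs none of it.
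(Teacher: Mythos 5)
Your derivation is correct: the hypotheses of the cancellation theorem are exactly the input data for Theorem \ref{we}, and restricting the resulting isometry $F$ to $U_{1}$ (which $F$ carries onto $U_{2}$) gives the desired isometric isomorphism. The paper offers no proof of its own, deferring to \cite{Cl} and noting that the two Witt theorems are equivalent, and your one-line deduction of cancellation from extension is precisely the standard argument that reference uses.
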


We now want to consider the classification of quadratic forms over the field of $p$-adic numbers, and introduce some definitions regarding the classification. Let $A$ and $B$ be two matrix representations of two quadratic forms respectively. Then there is an invertible matrix such that $A=P^{t}BP$, which implies that it is not true in general that the determinant of quadratic forms is not invariant up to equivalence. Therefore, it is natural to think about the following definition: The \textbf{discriminant} $d(Q)$ of $Q$ over a field $K$ is the coset of det$M$ in $K^{\times}/K^{\times 2}$, where $M$ is a matrix associated with $Q$. For nondegenerate quadratic forms over finite fields, the dimension and the discriminant are invariants up to equivalence. 
\begin{theorem}\cite{Se}
Two nondegenerate quadratic forms over a finite fields $\mathbb{F}_{q}$ are equivalent if and only if they have the same dimension and same discriminant.
\end{theorem}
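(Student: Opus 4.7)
The forward direction is immediate: if $Q_1$ and $Q_2$ are equivalent with associated matrices $A$ and $B$, then there is an invertible $P$ with $A = P^{t}BP$, whence $\det A = (\det P)^{2} \det B$, so $\det A$ and $\det B$ lie in the same coset of $\mathbb{F}_q^\times/\mathbb{F}_q^{\times 2}$; and the dimensions clearly agree.

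For the nontrivial converse, my plan is to proceed by induction on $n = \dim Q_1 = \dim Q_2$, using diagonalization together with Witt's cancellation theorem (Theorem~\ref{wc}) to reduce dimension. The base case $n = 1$ is easy: forms $\langle a \rangle$ and $\langle b \rangle$ with $d(\langle a\rangle) = d(\langle b\rangle)$ means $a = bc^{2}$ for some $c \in \mathbb{F}_q^\times$, and the linear change of variable $x \mapsto cx$ is the required isometry. For the inductive step, assume $n \geq 2$ and diagonalize $Q_1 = \mathrm{diag}(a_1,\ldots,a_n)$, $Q_2 = \mathrm{diag}(b_1,\ldots,b_n)$. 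The key step is to show that $Q_1$ represents the element $b_1$, i.e.\ there is a vector $v$ with $Q_1(v) = b_1$. Once this is done, Witt's extension theorem (or a direct splitting argument) gives a decomposition $Q_1 \cong \langle b_1 \rangle \oplus Q_1'$, and trivially $Q_2 \cong \langle b_1 \rangle \oplus Q_2'$, where $Q_1'$ and $Q_2'$ are nondegenerate of dimension $n-1$. Since discriminants are multiplicative across orthogonal sums, $d(Q_1) = b_1 \cdot d(Q_1')$ and $d(Q_2) = b_1 \cdot d(Q_2')$ modulo squares, so $d(Q_1') = d(Q_2')$. The induction hypothesis gives $Q_1' \cong Q_2'$, whence $Q_1 \cong Q_2$ by Witt's cancellation.

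The main obstacle is the representation step: proving that any nondegenerate quadratic form over $\mathbb{F}_q$ of dimension at least $2$ represents every element of $\mathbb{F}_q^\times$. It suffices to prove this for the binary form $\langle a_1, a_2 \rangle$ built from the first two diagonal entries of $Q_1$, i.e.\ to solve $a_1 x^{2} + a_2 y^{2} = b_1$ in $\mathbb{F}_q$. The approach is a counting argument: as $x$ ranges over $\mathbb{F}_q$, the set $S_1 = \{a_1 x^2 : x \in \mathbb{F}_q\}$ has cardinality $\tfrac{q+1}{2}$, since squaring is $2$-to-$1$ on $\mathbb{F}_q^\times$ and maps $0$ to $0$; similarly $S_2 = \{b_1 - a_2 y^2 : y \in \mathbb{F}_q\}$ has cardinality $\tfrac{q+1}{2}$. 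Since $|S_1| + |S_2| = q + 1 > q = |\mathbb{F}_q|$, the two sets must intersect, producing the desired $(x,y)$. (Here we use $q$ odd, i.e.\ characteristic not $2$, as assumed throughout the paper.)

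Combining these ingredients yields the full theorem. The structure is standard and the only delicate input is the finite-field counting lemma; all remaining manipulations are formal consequences of diagonalization, multiplicativity of the discriminant under orthogonal sums, and Witt's cancellation theorem.
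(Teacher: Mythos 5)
Your proof is correct, and it is essentially the standard argument (the one in Serre's \emph{A Course in Arithmetic}, which the paper cites for this theorem without reproducing a proof): the counting lemma showing that a nondegenerate binary form over $\mathbb{F}_q$ with $q$ odd represents every element of $\mathbb{F}_q^{\times}$, followed by splitting off $\langle b_1\rangle$ and inducting on the dimension. Since the paper imports this result as a black box, there is nothing in its text to diverge from; your write-up supplies a complete and correct proof along the expected lines.
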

Over the field of $p$-adic numbers $\mathbb{Q}_{p}$, the classification needs one more invariant regarding $\mathbb{Q}_{p}$, called the Hasse-invariant. Let $Q=\text{diag}(a_{1},a_{2},\cdots,a_{n})$ be a nondegenerate quadratic form over $\mathbb{Q}_{p}$. Then the \textbf{Hasse-invariant} of $Q$ is defined by
\[\epsilon(Q)=\prod_{i<j}\left ( a_{i},a_{j} \right )_{p},\]
where $\left ( a,b \right )_{p}$ is the Hilbert symbol defined by
\[\left ( a,b \right )_{p}=\begin{cases}
1 & \text{ if } x^{2}a+y^{2}b=1\text{ has a solution }(x,y) \text{ in }\mathbb{Q}_{p}\times \mathbb{Q}_{p} \\ 
-1 & \text{ otherwise }
\end{cases}.\]
\begin{theorem}\cite{Se}
Two nondegenerate quadratic forms over $\mathbb{Q}_{p}$ are equivalent if and only if they have the same dimension, same discriminant and same Hasse invariant.
\end{theorem}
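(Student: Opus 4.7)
The plan is to establish the two implications separately, with the substance concentrated in the converse. For the forward direction, suppose $Q$ and $Q'$ are equivalent via $A = P^{t}BP$. Equality of dimension is immediate, and $\det A = (\det P)^{2}\det B$ shows the discriminant is well-defined in $\mathbb{Q}_{p}^{\times}/\mathbb{Q}_{p}^{\times 2}$. The subtle point is that $\epsilon(Q)$ is defined via a choice of diagonalization, so I would verify invariance of $\epsilon$ under change of diagonalization using the chain-equivalence principle: any two diagonalizations of a given form are related by a sequence of elementary moves affecting only two coordinates at a time, so the claim reduces to the binary case, which is handled via bilinearity and symmetry of the Hilbert symbol together with basic identities such as $(a,-a)_{p}=1$.

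For the converse, I would induct on $n = \dim Q$. The base case $n=1$ is immediate from equality of discriminants, since $a_{1}/a_{1}' \in \mathbb{Q}_{p}^{\times 2}$. The inductive step rests on the following key lemma: two nondegenerate forms over $\mathbb{Q}_{p}$ with matching dimension, discriminant, and Hasse invariant represent a common nonzero value $a \in \mathbb{Q}_{p}^{\times}$. Granting this, one writes $Q \cong \langle a \rangle \oplus Q_{1}$ and $Q' \cong \langle a \rangle \oplus Q_{1}'$, then computes directly that $d(Q_{1}) = a\, d(Q)$ and that $\epsilon(Q_{1})$ differs from $\epsilon(Q)$ only by Hilbert symbols of the form $(a, \cdot)_{p}$ determined by $a$ and $d(Q)$, with the analogous identities for $Q_{1}'$. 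Since the invariants of $Q$ and $Q'$ agree, so do those of $Q_{1}$ and $Q_{1}'$, and the induction hypothesis together with Witt's cancellation (Theorem \ref{wc}) yields $Q \cong Q'$.

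The main obstacle is the common-value lemma, where the arithmetic of $\mathbb{Q}_{p}$ enters decisively. The strategy would be to characterize the set of classes of nonzero values represented by $Q$ purely in terms of $(\dim Q, d(Q), \epsilon(Q))$: since $a$ is represented by $Q$ if and only if $Q \oplus \langle -a \rangle$ is isotropic, and isotropy of a form over $\mathbb{Q}_{p}$ is itself detected by these three invariants (with any form of dimension at least $5$ automatically isotropic, by Meyer's theorem), one obtains that $Q$ and $Q'$ represent exactly the same set of classes, and nonemptiness follows from $Q$ representing its own diagonal coordinates. The low-dimensional situations $n = 2, 3, 4$ are where care is needed; I would dispatch them individually by a direct Hilbert-symbol computation, exploiting that $\mathbb{Q}_{p}^{\times}/\mathbb{Q}_{p}^{\times 2}$ is a finite group (of order $4$ for $p$ odd, order $8$ for $p=2$) so that only finitely many cases need to be checked.
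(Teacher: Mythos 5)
Your outline is correct: it is the standard classification argument from Serre's \emph{A Course in Arithmetic} (the reference \cite{Se} to which the paper attributes this theorem without giving a proof) --- invariance of $\epsilon$ via chain equivalence, then induction on dimension using the fact that the set of represented square classes is determined by $(\dim Q, d, \epsilon)$ through the isotropy criteria in ranks $2,3,4$ and Meyer-type automatic isotropy in rank $\geq 5$, followed by splitting off a common value and applying Witt cancellation. Since the paper supplies no proof of its own, there is nothing to contrast; your proposal fills that gap along exactly the lines of the cited source.
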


We can classify nondegenerate quadratic forms over the field of $p$-adic numbers by using these $3$ invariants, dimension, discriminant, and Hasse-invariant. In particular, $1$-dimensional nongenerate quadratic spaces are determined by nonzero square classes. 

\begin{lemma}\label{1dim}
The nonzero square class of $\mathbb{Q}_{p}$ is $\mathbb{Q}_{p}^{\times}/(\mathbb{Q}_{p}^{\times })^{2}=\left \{ 1,\lambda,p,\lambda p \right \}$ for some nonsquare $\lambda $ in $\mathbb{F}_{p}$.
\end{lemma}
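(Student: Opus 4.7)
The plan is to exploit the standard decomposition $\mathbb{Q}_p^{\times} \cong p^{\mathbb{Z}} \times \mathbb{Z}_p^{\times}$ together with Hensel's lemma. (The statement implicitly requires $p$ odd, since $\mathbb{F}_2^{\times}$ has no nonsquares; I would begin by noting this.) Every $x \in \mathbb{Q}_p^{\times}$ has a unique expression $x = p^n u$ with $n \in \mathbb{Z}$ and $u \in \mathbb{Z}_p^{\times}$. Because $p^{2k}$ is a square, only the parity of $n$ matters modulo $(\mathbb{Q}_p^{\times})^2$, so I can restrict to $n \in \{0,1\}$, accounting for one factor of $2$ in the order of the quotient group.

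Next I would show that the reduction map $\pi: \mathbb{Z}_p^{\times} \to \mathbb{F}_p^{\times}$ induces an isomorphism
\[
\mathbb{Z}_p^{\times}/(\mathbb{Z}_p^{\times})^2 \;\cong\; \mathbb{F}_p^{\times}/(\mathbb{F}_p^{\times})^2,
\]
the right-hand group having order $2$ with representatives $\{1,\lambda\}$ for any nonsquare $\lambda \in \mathbb{F}_p^{\times}$. Surjectivity is clear. For injectivity, given $u \in \mathbb{Z}_p^{\times}$ with $\pi(u) = \bar{a}^2$ in $\mathbb{F}_p^{\times}$, apply Hensel's lemma to $f(X) = X^2 - u$: one has $f(a) \equiv 0 \pmod{p}$ and $f'(a) = 2a \in \mathbb{Z}_p^{\times}$ since $p$ is odd and $\bar{a} \neq 0$. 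Hensel then lifts $a$ to an actual square root of $u$ in $\mathbb{Z}_p$, so $u$ is a square in $\mathbb{Z}_p^{\times}$.

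Combining the valuation and unit contributions shows that every class in $\mathbb{Q}_p^{\times}/(\mathbb{Q}_p^{\times})^2$ has a representative of the form $p^n \lambda^m$ with $n,m \in \{0,1\}$, giving the four candidates $\{1, \lambda, p, \lambda p\}$. I would finish by verifying that these four classes are pairwise distinct: the $p$-adic valuation distinguishes the pair $\{1,\lambda\}$ of valuation $0$ from the pair $\{p,\lambda p\}$ of valuation $1$, and within each pair the nontriviality of $\bar\lambda \in \mathbb{F}_p^{\times}/(\mathbb{F}_p^{\times})^2$ (together with the isomorphism above) rules out equality modulo squares.

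The only non-formal step is the invocation of Hensel's lemma for $X^2 - u$; the rest is bookkeeping about the split short exact sequence $1 \to \mathbb{Z}_p^{\times} \to \mathbb{Q}_p^{\times} \to \mathbb{Z} \to 1$ and its behavior after quotienting by squares. I would expect the main obstacle to be purely notational: being careful that $\lambda$ is named as a fixed lift to $\mathbb{Z}_p^{\times}$ (or equivalently to $\mathbb{Z}$) of a nonsquare mod $p$, so that the symbols $\lambda$ and $\lambda p$ genuinely sit inside $\mathbb{Q}_p^{\times}$ rather than just in $\mathbb{F}_p^{\times}$.
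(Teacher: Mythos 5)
Your proof is correct and is the standard argument: split off the valuation via $x=p^nu$, then use Hensel's lemma (with $p$ odd so that $f'(a)=2a$ is a unit) to identify $\mathbb{Z}_p^{\times}/(\mathbb{Z}_p^{\times})^{2}$ with $\mathbb{F}_p^{\times}/(\mathbb{F}_p^{\times})^{2}$. The paper does not actually prove this lemma --- it is stated as a known fact, essentially quoted from Serre's \emph{A Course in Arithmetic} --- so there is nothing to diverge from; your write-up, including the care about $p$ being odd and about $\lambda$ denoting a fixed unit lift of a nonsquare mod $p$, is exactly the proof one would supply.
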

Throughout this paper, $\lambda$ means a nonzero nonsquare in a finite field $\mathbb{F}_{p}$. Thus by Lemma \ref{1dim}, there are $4$ possible types of $1$-dimensional nondegenerate quadratic forms over the field of $p$-adic numbers:
\[ \text{diag}(1),~~~~\text{diag}(p),~~~~\text{diag}(\lambda), ~~~~ \text{diag}(\lambda p) .\]
Before we move to the classification of nondegenerate quadratic forms, we give the computations for the Hilbert symbols between all parings of elements in $\mathbb{Q}_{p}^{\times}/(\mathbb{Q}_{p}^{\times })^{2}$.

\begin{table}[H]
\begin{center}
\begin{tabular}{c||c|c|c|c}
$\left ( \cdot,\cdot  \right )$ &  $1$ & $\lambda $ & $p$  &  $\lambda p$
\\ \hline \hline
$1$ &  $1$ & $1$  & $1$  & $1$
\\ \hline
$\lambda$  & $1$  & $1$  & $-1$  & $-1$
\\ \hline
$p$   & $1$ & $-1$  & $1$  & $-1$
\\ \hline
$\lambda p$ & $1$ & $-1$ & $-1$  & $1$
\\ 
\end{tabular}
\caption{The Hilbert symbol when $p \equiv 1$ mod $4$. \cite{Ro}}
\end{center}
\end{table}

\begin{table}[H]
\begin{center}
\begin{tabular}{c||c|c|c|c}
$\left ( \cdot,\cdot  \right )$ &  $1$ & $\lambda $ & $p$  &  $\lambda p$
\\ \hline \hline
$1$ &  $1$ & $1$  & $1$  & $1$
\\ \hline
$\lambda$  & $1$  & $1$  & $-1$  & $-1$
\\ \hline
$p$   & $1$ & $-1$  & $-1$  & $1$
\\ \hline
$\lambda p$ & $1$ & $-1$ & $1$  & $-1$
\\ 
\end{tabular}
\caption{The Hilbert symbol when $p \equiv 3$ mod $4$. \cite{Ro}}
\end{center}
\end{table}

Now, we state the classification of nondegenerate quadratic forms over the field of $p$-adic numbers.
\begin{lemma}\cite{Ro}\label{cla1}
Let $p$ be prime. If $p \equiv 1$ mod $4$, then any nondegenerate $2$-dimensional quadratic form over $\mathbb{Q}_{p}$ is equivalent to exactly one of the following forms:
\[
    \text{diag}(1,1),\text{diag}(1,\lambda),\text{diag}(1,p),\text{diag}(1,\lambda p),\text{diag}(\lambda,p), \text{diag}(\lambda,\lambda p),\text{diag}(p,\lambda p).
\]
Here, diag$(\lambda , \lambda) $ diag$(p,p)$, diag$(\lambda p,\lambda p)$ are equivalent to diag$(1,1)$. On the other hand, if $p \equiv 3$ mod $4$, any nondegenerate $2$-dimensional quadratic form over $\mathbb{Q}_{p}$ is equivalent to exactly one of the following forms:
\[
    \text{diag}(1,1),\text{diag}(p,p),\text{diag}(1,\lambda),\text{diag}(1, p),\text{diag}(1,\lambda p),\text{diag}(\lambda, p),\text{diag}(\lambda,\lambda p).
\]
\end{lemma}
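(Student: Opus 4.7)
The plan is to invoke the classification theorem stated just above: a nondegenerate quadratic form over $\mathbb{Q}_p$ is determined up to equivalence by the triple (dimension, discriminant, Hasse invariant). Since the dimension is fixed at $2$, and by Lemma~\ref{1dim} the discriminant lies in $\mathbb{Q}_p^{\times}/(\mathbb{Q}_p^{\times})^{2}=\{1,\lambda,p,\lambda p\}$ while the Hasse invariant lies in $\{\pm 1\}$, there are at most $4\times 2=8$ equivalence classes. The goal is therefore to enumerate which of these $8$ possible invariant pairs are actually realized by a diagonalized form $\mathrm{diag}(a,b)$, and to pick a representative from each.

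First, I would reduce to the case where $a,b\in\{1,\lambda,p,\lambda p\}$: diagonalize the form and use Lemma~\ref{1dim} to replace each entry by its square-class representative. Since $\mathrm{disc}(\mathrm{diag}(a,b))=ab$ in $\mathbb{Q}_p^{\times}/(\mathbb{Q}_p^{\times})^{2}$ and $\epsilon(\mathrm{diag}(a,b))=(a,b)_p$, only finitely many forms need to be inspected. I would then tabulate, for each pair $(a,b)$ in this $4\times 4$ grid, the invariants $(ab,(a,b)_p)$, reading the Hasse invariant from Table~1 or Table~2 depending on whether $p\equiv 1$ or $p\equiv 3\pmod{4}$, and group the pairs by their invariants.

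Doing this for $p\equiv 1\pmod 4$, I expect to find that every pair $(d,\epsilon)\in\{1,\lambda,p,\lambda p\}\times\{\pm 1\}$ is realized except $(1,-1)$. Indeed, the diagonal entries of Table~1 are all $+1$, so any form $\mathrm{diag}(a,a)$ has trivial Hasse invariant, and one checks the other four forms of discriminant $1$ (namely $\mathrm{diag}(a,a)$ for $a\in\{1,\lambda,p,\lambda p\}$) all land in the class $(1,1)$, giving the claimed chain of equivalences $\mathrm{diag}(1,1)\cong\mathrm{diag}(\lambda,\lambda)\cong\mathrm{diag}(p,p)\cong\mathrm{diag}(\lambda p,\lambda p)$. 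The remaining seven invariant pairs are represented by the seven forms listed in the first half of the lemma. For $p\equiv 3\pmod 4$, the same procedure shows that every pair is realized except $(\lambda,-1)$: the four forms of discriminant $\lambda$ are $\mathrm{diag}(1,\lambda),\mathrm{diag}(\lambda,1),\mathrm{diag}(p,\lambda p),\mathrm{diag}(\lambda p,p)$, and Table~2 gives Hasse invariant $+1$ in each case, so this class is missing. The other seven invariant pairs are realized by the seven forms listed in the second half of the lemma.

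The main obstacle is not any single computation but the bookkeeping: one must confirm both that the listed representatives exhaust all realized pairs $(d,\epsilon)$ and that exactly one pair fails to be realized in each case (giving $7$ classes rather than $8$). The cleanest way to organize this is a case split on the discriminant $d\in\{1,\lambda,p,\lambda p\}$: for each $d$, the candidate forms are $\mathrm{diag}(a,ad)$ as $a$ ranges over the four square classes, and one computes the Hilbert symbols $(a,ad)_p$ from the appropriate table to see which of $\{+1,-1\}$ is achieved. This makes the argument a finite and mechanical verification, with the non-realization of $(1,-1)$ (resp.\ $(\lambda,-1)$) as the only structural point to flag.
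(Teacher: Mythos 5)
Your proposal is correct: the paper states this lemma as a cited result from [Ro] and supplies no proof of its own, so there is nothing to compare against, but your argument (reduce to square-class entries, enumerate the $4\times 2$ invariant pairs $(d,\epsilon)$ via the Hilbert symbol tables, and observe that exactly the pair $(1,-1)$, resp.\ $(\lambda,-1)$, is unrealizable because $\epsilon(\mathrm{diag}(a,b))=(a,-d)_p$ is forced to be $1$ when $-d$ is a square) is the standard and complete verification. Your tabulation also recovers the paper's supplementary remarks, e.g.\ that $\mathrm{diag}(p,p)\cong\mathrm{diag}(1,1)$ for $p\equiv 1\pmod 4$ while $\mathrm{diag}(p,p)$ is a separate class with invariants $(1,-1)$ for $p\equiv 3\pmod 4$.
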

Here, diag$(\lambda,\lambda)$ is equivalent to diag$(1,1)$, diag$(\lambda p, \lambda p)$ is equivalent to diag$(p,p)$ and diag$(p, \lambda p)$ is equivalent to diag$(1,\lambda)$. We can classify $3$-dimensional nondegenerate quadratic forms over the field of $p$-adic numbers in a similar way.

\begin{lemma}\cite{Ro}\label{cla2}
Let $p$ be a prime. If $p \equiv 1$ mod $4$, then any nondegenerate $3$-dimensional quadratic form over $\mathbb{Q}_{p}$ is equivalent to exactly one of the following forms:
\[\text{diag}(1,1,1),\text{diag}(\lambda,p,\lambda p),\text{diag}(1,1,\lambda),\text{diag}(1,1,p),\text{diag}(1,1,\lambda p),\text{diag}(1,\lambda,p),\text{diag}(1,\lambda,\lambda p),\text{diag}(1,p,\lambda p).\]
If $p \equiv 3$ mod $4$, then any nondegenerate $3$-dimensional quadratic form over $\mathbb{Q}_{p}$ is equivalent to exactly one of the following forms:
\[\text{diag}(1,1,1),\text{diag}(1,p,p),\text{diag}(1,1,p),\text{diag}(p,p,p),\text{diag}(1,\lambda,p),\text{diag}(1,1,\lambda p),\text{diag}(p,p,\lambda ),\text{diag}(1,p,\lambda p).\]
\end{lemma}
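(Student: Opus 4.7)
The plan is to apply the classification theorem cited earlier in the preliminaries: two nondegenerate quadratic forms over $\mathbb{Q}_p$ are equivalent if and only if they share dimension, discriminant, and Hasse invariant. Fixing $n=3$, the discriminant $d(Q)$ lies in $\mathbb{Q}_p^\times/(\mathbb{Q}_p^\times)^2=\{1,\lambda,p,\lambda p\}$ by Lemma \ref{1dim}, while the Hasse invariant $\epsilon(Q)\in\{\pm 1\}$, so there are at most $4\cdot 2 = 8$ equivalence classes. It then suffices to exhibit $8$ diagonal forms whose $(d,\epsilon)$ pairs exhaust all eight possibilities, since any other nondegenerate $3$-dimensional form must share its invariants with one of them.

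The core of the argument is a direct computation. For each listed form $\text{diag}(a_1,a_2,a_3)$, I would calculate $d = a_1 a_2 a_3 \in \mathbb{Q}_p^\times/(\mathbb{Q}_p^\times)^2$ together with $\epsilon = (a_1,a_2)_p(a_1,a_3)_p(a_2,a_3)_p$, reading the Hilbert symbols directly from Table 1 if $p\equiv 1 \pmod 4$ and Table 2 if $p\equiv 3 \pmod 4$. I would then arrange the resulting pairs in a $4\times 2$ grid indexed by $d$ and $\epsilon$ and verify that each cell is filled exactly once. Once that is done, the classification theorem forces every nondegenerate $3$-dimensional form over $\mathbb{Q}_p$ to be equivalent to exactly one of the listed representatives, completing the proof of both cases simultaneously.

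A structural observation explains the different shapes of the two lists. When $p\equiv 1 \pmod 4$, Hensel's lemma gives $-1 \in (\mathbb{Q}_p^\times)^2$, so $-1$ represents the trivial class in $\mathbb{Q}_p^\times/(\mathbb{Q}_p^\times)^2$; when $p\equiv 3 \pmod 4$, $-1$ is a nonsquare unit and represents the class of $\lambda$. This flips several Hilbert-symbol values between the two tables (most visibly the diagonal entry $(p,p)_p$), which is why forms such as $\text{diag}(p,p,p)$ or $\text{diag}(1,p,p)$ only appear as canonical representatives for $p\equiv 3 \pmod 4$, while $\text{diag}(\lambda,p,\lambda p)$ appears only for $p\equiv 1 \pmod 4$.

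The main obstacle is purely clerical: producing all $16$ Hilbert-symbol products without error. I would streamline this by exploiting the bimultiplicativity of $(\cdot,\cdot)_p$ on square classes and by noting that any Hilbert factor involving a $1$ contributes trivially, so that a form with a $1$ on the diagonal collapses to a single nontrivial Hilbert symbol. Since the listed $8$ forms for each congruence class are hand-selected to represent distinct $(d,\epsilon)$ pairs, the verification reduces to checking that no two entries of the grid coincide, after which completeness is immediate from the $|d|\cdot|\epsilon|=8$ upper bound.
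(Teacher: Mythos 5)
Your proposal is correct and is exactly the standard argument: the paper itself states this lemma without proof (deferring to \cite{Ro}), and your route --- bound the number of classes by $|\mathbb{Q}_p^\times/(\mathbb{Q}_p^\times)^2|\cdot|\{\pm1\}|=8$ via the classification theorem, then exhibit eight listed forms whose $(d,\epsilon)$ pairs are pairwise distinct --- is complete, since distinctness gives the ``exactly one'' and the pigeonhole gives exhaustiveness without needing to know a priori that every pair is realized in rank $3$. I spot-checked the computations (e.g.\ for $p\equiv 1 \bmod 4$, $\text{diag}(\lambda,p,\lambda p)$ has $d=1$, $\epsilon=(-1)^3=-1$; for $p\equiv 3 \bmod 4$, $\text{diag}(1,p,\lambda p)$ has $d=\lambda$, $\epsilon=(p,\lambda p)_p=1$) and each list does fill the $4\times 2$ grid exactly once.
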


Lastly, we give some facts from the field of $p$-adic numbers that we will use later. 
\begin{lemma}\label{co1}
Suppose that $p \equiv 3 $ mod $4$. There are $a,b,c,d$ and $e$ in $\mathbb{Q}_{p}$ such that
\begin{itemize}
    \item $a^{2}+b^{2}=\lambda$,
    \item $a^{2}+b^{2}+c^{2}=p$,
    \item $a^{2}+b^{2}+d^{2}=\lambda p$,
    \item $a^{2}+b^{2}+e^{2}=0$.
\end{itemize}
\end{lemma}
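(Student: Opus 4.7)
The plan is to fix $a, b \in \mathbb{Q}_p$ satisfying the first equation and then realize each of the other three equations as a one-variable problem $x^2 = \alpha$, whose solvability is decided purely by whether $\alpha$ is a square in $\mathbb{Q}_p$. The key arithmetic input throughout is that the hypothesis $p \equiv 3 \pmod{4}$ forces $-1$ to be a nonsquare in $\mathbb{F}_p$, so that $-\lambda = (-1)\cdot\lambda$, being a product of two nonsquares, is a nonzero square in $\mathbb{F}_p$.

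For the first equation, observe that the form $x^2 + y^2$ is anisotropic over $\mathbb{F}_p$ (otherwise $-1$ would be a square in $\mathbb{F}_p$), so it is the norm form of the extension $\mathbb{F}_{p^2}/\mathbb{F}_p$. Since the norm map $\mathbb{F}_{p^2}^{\times} \to \mathbb{F}_p^{\times}$ is surjective, there exist $a_0, b_0 \in \mathbb{F}_p$ with $a_0^2 + b_0^2 \equiv \lambda \pmod{p}$. Moreover $a_0 \neq 0$, for otherwise $\lambda = b_0^2$ would itself be a square, contrary to the definition of $\lambda$. Now apply Hensel's lemma to $f(x) = x^2 + b_0^2 - \lambda \in \mathbb{Z}_p[x]$: one has $f(a_0) \equiv 0 \pmod{p}$ while $f'(a_0) = 2a_0 \not\equiv 0 \pmod{p}$, since $p \neq 2$. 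The Hensel lift $a \in \mathbb{Q}_p$, together with $b := b_0$, satisfies $a^2 + b^2 = \lambda$.

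With $a, b$ fixed, the remaining three equations become the square-root problems $c^2 = p - \lambda$, $d^2 = \lambda(p - 1)$, and $e^2 = -\lambda$. Each right-hand side is a $p$-adic unit whose reduction modulo $p$ equals $-\lambda$, which is a nonzero square in $\mathbb{F}_p$ by the observation above. Picking any $c_0 \in \mathbb{F}_p^{\times}$ with $c_0^2 \equiv -\lambda \pmod{p}$, a second application of Hensel's lemma to each of $x^2 - (p - \lambda)$, $x^2 - \lambda(p - 1)$, and $x^2 + \lambda$ (the derivative at $c_0$ is $2c_0$, a unit, in all three cases) produces the desired elements $c, d, e \in \mathbb{Q}_p$.

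The substantive ingredient is producing $a_0, b_0$ in the first step; once that is in hand, everything else is routine Hensel lifting. The hard part is thus the surjectivity of the norm from the unramified quadratic extension $\mathbb{F}_{p^2}/\mathbb{F}_p$, which is exactly the input that forces the anisotropic binary form $x^2 + y^2$ to represent the nonsquare class $\lambda$ over $\mathbb{F}_p$.
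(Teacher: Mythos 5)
Your proof is correct and follows essentially the same route as the paper's: solve each equation modulo $p$ (using that $-\lambda$ is a nonzero square in $\mathbb{F}_{p}$ because $p \equiv 3$ mod $4$ makes $-1$ a nonsquare) and then lift by Hensel's lemma. The only differences are minor: the paper obtains the mod-$p$ solution of $a^{2}+b^{2}=\lambda$ by replacing $\lambda$ with $-1$ and quoting that $-1$ is a sum of two squares in $\mathbb{F}_{p}$, whereas you invoke surjectivity of the norm for $\mathbb{F}_{p^{2}}/\mathbb{F}_{p}$; you are in fact slightly more careful than the paper in Hensel-lifting the first equation to exact equality in $\mathbb{Q}_{p}$ rather than leaving it as a congruence.
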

\begin{proof}
We may use $-1$ instead of $\lambda$ when $p \equiv 3$ mod $4$. For a given nonsquare $\lambda$, there exist $a,b$ in $\mathbb{F}_{p}$ such that $a^{2}+b^{2}=-1$. Let us define $f(x)=a^{2}+b^{2}+x^{2}-p$. Then we have $f(1)=0$ mod $p$ and $f'(1)=2\not\equiv 0$ mod $p$. By Hensel's Lemma, there is $c$ in $\mathbb{Z}_{p}$ such that $a^{2}+b^{2}+c^{2}=p$. Let us define $g(x)=a^{2}+b^{2}+x^{2}+p$ and $h(x)=a^{2}+b^{2}+x^{2}$. By Hensel's Lemma again, we can complete the proof in a similar way.
\end{proof}
The proof of the following Lemma is similar to the proof of Lemma \ref{co1}. We leave it for our reader as an exercise. 
\begin{lemma}\label{co2}
Let $p \equiv 1$ mod $4$. Then there are $A,B,C,D,E,F,G$ and $H$ in $\mathbb{Q}_{p}$ such that
\begin{itemize}
    \item $A^{2}+B^{2}=\lambda$,
    \item $C^{2}+D^{2}=p$,
    \item $E^{2}+F^{2}=\lambda p$,
    \item $G^{2}+H^{2}=0$.
\end{itemize}
\end{lemma}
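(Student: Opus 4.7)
My plan is to reduce all four identities to a single Hensel-type lift, in the spirit of the proof of Lemma \ref{co1}. The key observation is that when $p \equiv 1 \pmod{4}$, the congruence $x^{2}+1 \equiv 0 \pmod{p}$ has a simple root in $\mathbb{F}_{p}^{\times}$, and the derivative $2x$ is a unit at that root, so Hensel's Lemma produces $i \in \mathbb{Z}_{p}$ with $i^{2}=-1$. Once $i$ is available inside $\mathbb{Q}_{p}$, the sum-of-two-squares form factors as $X^{2}+Y^{2}=(X+iY)(X-iY)$ and becomes both isotropic and universal, and every one of the four required decompositions will follow by pure algebra.

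The identity $G^{2}+H^{2}=0$ is then immediate: take $G=1$ and $H=i$, so that $G^{2}+H^{2}=1+i^{2}=0$. For the remaining three identities I will use the elementary formula
\[
c \;=\; \Bigl(\tfrac{1+c}{2}\Bigr)^{2} + \Bigl(\tfrac{i(1-c)}{2}\Bigr)^{2},
\]
which holds for every $c \in \mathbb{Q}_{p}$ because $(1+c)^{2}-(1-c)^{2}=4c$. Specializing $c$ to $\lambda$, $p$, and $\lambda p$ in turn yields the pairs $(A,B)$, $(C,D)$, and $(E,F)$ demanded by the statement.

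In contrast with Lemma \ref{co1}, where $p\equiv 3\pmod{4}$ forces $-1$ to be a nonsquare (so one identifies it with $\lambda$ and then builds up three-square representations of $p$, $\lambda p$, and $0$ by successive Hensel lifts), here $-1$ is already a square, and that single extra element of $\mathbb{Q}_{p}$ linearizes the entire problem. Consequently the only nontrivial step is the Hensel lift of $i$; the main ``obstacle'' is really just recognizing that one Hensel argument carries the whole proof and that no separate lifts for $\lambda$, $p$, and $\lambda p$ are needed.
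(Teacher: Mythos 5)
Your proof is correct, and it takes a genuinely different (and more economical) route than the one the paper intends. The paper leaves Lemma \ref{co2} as an exercise ``similar to the proof of Lemma \ref{co1},'' i.e.\ one is expected to produce a separate residue solution and Hensel lift for each of the four identities (for instance, solving $x^{2}+y^{2}\equiv \lambda \pmod p$ and lifting, then setting up $f(x)=x^{2}+D^{2}-p$ with root $x_{0}\equiv iD \pmod p$, and so on). You instead perform a single Hensel lift, using $p\equiv 1 \pmod 4$ to obtain $i\in\mathbb{Z}_{p}$ with $i^{2}=-1$ (the root of $x^{2}+1$ mod $p$ is a unit, so the derivative $2x$ is a unit and Hensel applies), and then dispatch all four identities by the polynomial identity
\[
c=\Bigl(\tfrac{1+c}{2}\Bigr)^{2}+\Bigl(\tfrac{i(1-c)}{2}\Bigr)^{2},
\]
which is valid since $2$ is a unit in $\mathbb{Z}_{p}$ and which exhibits the form $x^{2}+y^{2}$ as isotropic, hence universal, over $\mathbb{Q}_{p}$. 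What the paper's approach buys is uniformity with the $p\equiv 3\pmod 4$ case, where $-1$ is not a square and one genuinely needs three-variable representations; what your approach buys is a one-step argument that makes transparent exactly why the $p\equiv 1\pmod 4$ case only needs two squares. No gaps.
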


\section{Embeddings of quadratic  spaces over the field of $p$-adic numbers}
In this section, we prove which types of quadratic spaces over the field of $p$-adic numbers can be embedded in the Euclidean $p$-adic space $(\mathbb{Q}_{p}^{n},\text{diag}(1^{n}))$, and the Lorentzian $p$-adic space $(\mathbb{Q}_{p}^{n},\text{diag}(1^{n-1},\lambda ))$. Throughout this section, I will refer to $a,b,c,d,e,A,B,C,D,E,F$ and $H$, which are all elements in the field of $p$-adic numbers and are are the same as the elements referred to in Lemma \ref{co1} and Lemma \ref{co2}.

\subsection{Embeddings in $(\mathbb{Q}_{p}^{n},\text{diag}(1^{n} ))$ when $p\equiv 1$ mod $4$}
We assume that $p \equiv 1$ mod $4$. We first consider the case of $1$-dimensional nondegenerate quadratic spaces. Suppose that $n=2$. Then all $4$ types described in Lemma \ref{1dim} always exist in $(\mathbb{Q}_{p}^{2},\text{diag}(1,1))$:
\begin{itemize}
    \item $\text{diag}(1)=\left \langle (1,0) \right \rangle$,
    \item $\text{diag}(p)=\left \langle (C,D) \right \rangle$,
    \item $\text{diag}(\lambda)=\left \langle (A,B) \right \rangle$,
    \item $\text{diag}(\lambda p)=\left \langle (E,F) \right \rangle$.
\end{itemize}
If $n \geq 3$, we only need to add more $0$'s, so these $4$ types of $1$-dimensional nondegenerate quadratic spaces always exist in $(\mathbb{Q}_{p}^{n},\text{diag}(1^{n}))$ when $n \geq 3$.

\smallskip
Let us think about $2$-dimensional nondegenerate quadratic spaces over $\mathbb{Q}_{p}$. Suppose that $n=3$. Since there are $4$ types of $1$-dimensional nondegenerate quadratic subspaces of $(\mathbb{Q}_{p}^{3},\text{diag}(1^{3}))$, there should be $4$ types of $2$-dimensional quadratic subspaces of $(\mathbb{Q}_{p}^{3},\text{diag}(1^{3}))$. By looking at Theorem \ref{cla1}, we find $4$ of $7$ types from Theorem \ref{co1}:
\begin{itemize}
    \item diag$(1,1)=\left \langle (1,0,0),(0,1,0) \right \rangle$,
    \item diag$(1,\lambda)=\left \langle (1,0,0),(0,A,B) \right \rangle$,
    \item diag$(1,p)=\left \langle (1,0,0),(0,C,D) \right \rangle$,
    \item diag$(1,\lambda p)=\left \langle (1,0,0),(0,E,F) \right \rangle$.
\end{itemize}
It is not hard to show that these $4$ types exist when $n \geq 4$. The proof is similar to above, where we insert $0$ $(n-2)$ times into each vector. If $n=4$, then the $3$ other types in Theorem \ref{cla1} also exist.
\begin{itemize}
    \item diag$(\lambda,p)=\left \langle (A,B,0,0),(0,0,C,D) \right \rangle$,
    \item diag$(\lambda,\lambda p)=\left \langle (A,B,0,0),(0,0,E,F) \right \rangle$,
    \item diag$(p,\lambda p)=\left \langle (C,D,0,0),(0,0,E,F) \right \rangle$.
\end{itemize}
Furthermore, if $n\geq 7$, all $7$ types in Theorem \ref{cla1} exist and the proof is similar to above. 

\smallskip
Now we consider $3$-dimensional nondegenerate quadratic spaces. Suppose that $n=4$. Since there are $4$ types of $1$-dimensional nondegenerate quadratic subspaces of $(\mathbb{Q}_{p}^{4},\text{diag}(1^{4}))$, there have to be $4$ types of $3$-dimensional nondegenerate quadratic subspaces of $(\mathbb{Q}_{p}^{4},\text{diag}(1^{4}))$. We find $4$ of the $7$ types from Theorem \ref{cla2}:
\begin{itemize}
    \item diag$(1,1,1)=\left \langle (1,0,0,0),(0,1,0,0),(0,0,1,0) \right \rangle$,
    \item diag$(1,1,\lambda)=\left \langle (1,0,0,0),(0,1,0,0),(0,0,A,B) \right \rangle$,
    \item diag$(1,1,p)=\left \langle (1,0,0,0),(0,1,0,0),(0,0,C,D) \right \rangle$,
    \item diag$(1,1,\lambda p)=\left \langle (1,0,0,0),(0,1,0,0),(0,0,E,F) \right \rangle$.
\end{itemize}
Similar as before, it is not hard to show that these $4$ types exist when $n \geq 5$. Suppose that $n=5$. Since there are $7$ types of $2$-dimensional nondegenerate quadratic subspaces of $(\mathbb{Q}_{p}^{5},\text{diag}(1^{5}))$, there have to be $7$ types of $3$-dimensional quadratic subspaces of $(\mathbb{Q}_{p}^{5},\text{diag}(1^{5}))$. We explicitly derive $3$ of the other types in Theorem \ref{cla2}:
\begin{itemize}
    \item diag$(1,\lambda,p)=\left \langle (1,0,0,0,0),(0,A,B,0,0),(0,0,0,C,D) \right \rangle$,
    \item diag$(1,\lambda,\lambda p)=\left \langle (1,0,0,0,0), (0,A,B,0,0),(0,0,0,E,F) \right \rangle$,
    \item diag$(1,p,\lambda p)=\left \langle (1,0,0,0,0),(0,C,D,0,0),(0,0,0,E,F) \right \rangle$.
\end{itemize}
Furthermore, if $n = 6$, all $8$ cases in Theorem \ref{cla2} exist containing
\[\text{diag}(\lambda,p,\lambda p)=\left \langle (A,B,0,0,0,0),(0,0,C,D,0,0),(0,0,0,0,E,F) \right \rangle.\] 
Similarly, all $8$ cases can be embedded in $(\mathbb{Q}^{n},\text{diag}(1^{n}))$ when $n \geq 6$. 

\smallskip
For nondegenerate quadratic spaces of dimension $n$ greater than $3$, they are determined by $3$-dimensional nondegenerate quadratic spaces. We will not mention this Theorem in other sessions again.
\begin{theorem}\cite{Ro}
Let $n \geq 3$ and $Q$ be a nondegenerate quadratic form over $\mathbb{Q}_{p}$. Then $Q$ is equivalent to diag$(1^{n-3})\oplus Q'$, where $Q'$ is some $3$-dimensional quadratic form over $\mathbb{Q}_{p}$. 
\end{theorem}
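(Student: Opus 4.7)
The plan is to deduce the theorem directly from the classification of nondegenerate forms over $\mathbb{Q}_p$ by the triple (dimension, discriminant, Hasse invariant). The key observation making this clean is that padding with copies of $\text{diag}(1)$ is invisible to both the discriminant and the Hasse invariant: since $(1,a)_p = 1$ for every $a \in \mathbb{Q}_p^\times$ (the first row/column of both Hilbert-symbol tables), for any $3$-dimensional nondegenerate $Q' = \text{diag}(b_1,b_2,b_3)$ the form $\text{diag}(1^{n-3}) \oplus Q'$ has discriminant $b_1 b_2 b_3 = d(Q')$, and its Hasse invariant
\[\prod_{i<j}(a_i,a_j)_p = (b_1,b_2)_p(b_1,b_3)_p(b_2,b_3)_p = \epsilon(Q')\]
because every factor involving a $1$ is trivial. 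So $\text{diag}(1^{n-3}) \oplus Q'$ and $Q'$ agree in discriminant and Hasse invariant, while having dimensions $n$ and $3$ respectively.

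The second step is to check that every pair $(d,\epsilon) \in \{1,\lambda,p,\lambda p\} \times \{\pm 1\}$ is actually realized by some nondegenerate $3$-dimensional form. By Lemma \ref{1dim}, $\mathbb{Q}_p^\times/(\mathbb{Q}_p^\times)^2$ has $4$ elements, so there are exactly $8$ such pairs. Inspecting the eight representatives given in Lemma \ref{cla2}, one computes each discriminant (four values, each appearing twice) and each Hasse invariant via Tables 1 and 2. The claim to verify is that for each discriminant value the two corresponding forms have opposite Hasse invariants, so that the eight forms realize all eight pairs. This must be done separately for $p \equiv 1$ and $p \equiv 3 \pmod 4$.

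With these two ingredients, the theorem is almost automatic. Given a nondegenerate $Q$ of dimension $n \geq 3$, let $(d,\epsilon)$ be its discriminant/Hasse pair. Pick from Lemma \ref{cla2} the unique $3$-dimensional form $Q'$ realizing $(d,\epsilon)$. By the first step, $\text{diag}(1^{n-3}) \oplus Q'$ has dimension $n$, discriminant $d$, and Hasse invariant $\epsilon$, matching $Q$ on all three invariants. By Serre's classification stated earlier in the excerpt, $Q \cong \text{diag}(1^{n-3}) \oplus Q'$. The case $n=3$ is vacuous, taking $Q' = Q$.

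The main obstacle is the finite case-check in the second step, namely that the eight forms listed in Lemma \ref{cla2} realize all eight invariant pairs. It is routine but mildly tedious, and the two parity cases $p \equiv 1$ and $p \equiv 3 \pmod 4$ behave differently (e.g., $(p,p)_p$ flips sign), so each must be treated on its own. Aside from this bookkeeping, everything reduces to the triviality of $(1, \cdot)_p$ and to the classification theorem.
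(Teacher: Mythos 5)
Your proposal is correct, and it is the standard argument; note that the paper itself offers no proof of this statement, citing it from [Ro], so there is no internal proof to contrast with. Two small remarks. First, your "second step" can be made entirely free of computation: Lemma \ref{cla2} asserts that the eight listed $3$-dimensional forms are pairwise inequivalent, hence by the classification theorem they carry eight distinct pairs $(d,\epsilon)$; since $\lvert \mathbb{Q}_p^{\times}/(\mathbb{Q}_p^{\times})^2 \rvert = 4$ there are only eight such pairs in total, so all are realized and no table-checking or case split on $p \bmod 4$ is needed. Second, your key computation --- that $(1,a)_p=1$ makes padding by $\text{diag}(1^{n-3})$ invisible to both the discriminant and the Hasse invariant --- together with the three-invariant classification does give the full statement, including the vacuous $n=3$ case, so the argument is complete as written.
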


Let us think about the case of degenerate quadratic spaces. For example, it is hard to check if diag$(p,0,0)$ can be embedded in $(\mathbb{Q}_{p}^{5},\text{diag}(1^{5}))$ by finding an orthogonal basis. In addition, it is not possible to use its discriminant and Hasse-invariant since diag$(p,0,0)$ is degenerate. It turns out that the nondegenerate part of degenerate spaces determines the conditions of embedding of degenerate quadratic spaces.
\begin{theorem}\label{main}
Suppose that $p\equiv 1$ mod $4$. Let $A$ be a degenerate quadratic space isometrically isomorphic to $\text{diag}(0^{k}) \oplus S$, where $S$ is a nondegenerate quadratic space. Then $A$ is embedded in diag$(1^{n})$ if and only if $S$ is embedded in $\text{diag}(1^{n-2k})$.
\end{theorem}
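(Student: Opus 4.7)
The plan is to prove both implications, and the key observation that makes the $p \equiv 1$ mod $4$ case clean is that $\text{diag}(1,1)$ is already a hyperbolic plane. Indeed, Lemma \ref{co2} provides $(G,H) \neq (0,0)$ with $G^2 + H^2 = 0$, so $\text{diag}(1,1)$ is isotropic and therefore $\text{diag}(1,1) \cong \mathbb{H}$; consequently $\text{diag}(1^{2k}) \cong \mathbb{H}^k$. Inside each hyperbolic summand we can choose an isotropic line spanned by $(G,H)$, producing a totally isotropic $k$-dimensional subspace, i.e., an embedding $\text{diag}(0^k) \hookrightarrow \text{diag}(1^{2k})$. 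This immediately handles the easy direction $(\Leftarrow)$: given $S \hookrightarrow \text{diag}(1^{n-2k})$ placed in the last $n-2k$ coordinates, adjoining the above isotropic $k$-plane in the first $2k$ coordinates gives $A = \text{diag}(0^k) \oplus S \hookrightarrow \text{diag}(1^n)$.

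For the harder direction $(\Rightarrow)$, assume $\iota : A \hookrightarrow \text{diag}(1^n)$ and identify $A$ with its image. Let $U \subset A$ be the radical, which is exactly the $\text{diag}(0^k)$ part, with basis $u_1, \ldots, u_k$; it is a totally isotropic subspace of the nondegenerate space $\text{diag}(1^n)$. Proposition \ref{hyper} produces a dual totally isotropic $U'$ with basis $u_1', \ldots, u_k'$ satisfying $B(u_i, u_j') = \delta_{ij}$ and $\langle U, U' \rangle \cong \mathbb{H}^k \cong \text{diag}(1^{2k})$. Since $\text{diag}(1^n) = \langle U, U' \rangle \oplus \langle U, U' \rangle^{\perp}$ and $\text{diag}(1^n) \cong \text{diag}(1^{2k}) \oplus \text{diag}(1^{n-2k})$, Witt's cancellation theorem (Theorem \ref{wc}) forces $\langle U, U' \rangle^{\perp} \cong \text{diag}(1^{n-2k})$.

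The remaining step, and the main technical obstacle, is to isometrically embed the nondegenerate part $S$ into $\langle U, U' \rangle^{\perp}$. The issue is that while $S$ is orthogonal to $U$ inside $A$, it need not be orthogonal to $U'$ inside $\text{diag}(1^n)$. The fix is a linear correction by an element of $U$: define
\[
\tilde{s} \;=\; s \;-\; \sum_{i=1}^{k} B(s, u_i')\, u_i.
\]
Using $B(s, u_i) = 0$ (because $s \in S$ and $U$ is the radical of $A$), $B(u_i, u_j) = 0$ (total isotropy of $U$), and $B(u_i, u_j') = \delta_{ij}$, a direct check shows $B(\tilde{s}, u_j) = B(\tilde{s}, u_j') = 0$, so $\tilde{s} \in \langle U, U' \rangle^{\perp}$, and $Q(\tilde{s}) = Q(s)$, since the cross terms involve either $B(s, u_i) = 0$ or $B(u_i, u_j) = 0$. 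The map $s \mapsto \tilde{s}$ is clearly linear, and it is injective because $\tilde{s} = 0$ would put $s \in U \cap S = \{0\}$. This produces the required embedding $S \hookrightarrow \langle U, U' \rangle^{\perp} \cong \text{diag}(1^{n-2k})$, completing the proof.
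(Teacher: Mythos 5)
Your proof is correct and follows essentially the same strategy as the paper's: realize $\text{diag}(0^{k})$ as a totally isotropic half of $k$ hyperbolic planes inside $\text{diag}(1^{n})$ (using $\mathbb{H}\cong\text{diag}(1,1)$ when $p\equiv 1 \bmod 4$) and project the nondegenerate part $S$ along the isotropic directions onto the orthogonal complement. The only differences are organizational: you apply Proposition \ref{hyper} to $U$ in place and identify $\left\langle U,U'\right\rangle^{\perp}$ by Witt cancellation, rather than fixing a global hyperbolic basis and moving $W$ into standard position by Witt extension, which lets you treat all $n$ uniformly instead of splitting into the paper's odd/even cases, and you explicitly verify that the correction map preserves $Q$ and is injective, a point the paper leaves implicit.
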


\begin{proof}
Suppose that $A$ is embedded in diag$(1^{n})$ and $n$ is odd. We first note that $\mathbb{H}$ is isometrically isomorphic to diag$(1,1)$ when $p \equiv 1$ mod $4$. Thus we have $\mathbb{Q}_{p}^{n}=\bigoplus_{i=1}^{\frac{n-1}{2}}\mathbb{H}\oplus \text{diag}(1)$. 

\smallskip
Let us choose coordinates $x_{i},y_{i}$ and $z_{n}$ such that $B(x_{i},y_{j})=\delta_{i,j}$, $B(x_{i},x_{j})=B(y_{i},y_{j})=0$, $B(z_{n},z_{n})=1$, $B(x_{i},z_{n})=B(y_{j},z_{n})=0$ for any $i,j$. Then there exists an isotropic subspace $W$ such that $W \simeq \text{diag}(0^{k})\simeq \left \langle x_{1},\cdots,x_{k} \right \rangle$ and $A=W\oplus S$. By Witt's Theorem, there is an isometry $g$ in $O(n)$ such that $g(W)=\left \langle x_{1},\cdots,x_{k} \right \rangle$. Thus we have 
\[g(S) \cap g(W)=0 \text{ and } g(S) \subset \left \langle x_{1},\cdots,x_{k} \right \rangle^{\perp}=\left \langle x_{1},\cdots,x_{k},\cdots,x_{\frac{n-1}{2}},y_{k+1},\cdots,y_{\frac{n-1}{2}},z_{n} \right \rangle.\]
Let us define an orthogonal projection $\pi$ and an inclusion map $i$ as follows:
\begin{align*}
  \pi&:\left \langle x_{1},\cdots,x_{\frac{n-1}{2}},y_{k+1},\cdots,y_{\frac{n-1}{2}},z_{n} \right \rangle \longrightarrow \left \langle x_{k+1},y_{k+1},\cdots,x_{\frac{n-1}{2}},y_{\frac{n-1}{2}},z_{n} \right \rangle\\
  i&:g(S) \longrightarrow \left \langle x_{1},\cdots,x_{\frac{n-1}{2}},y_{k+1},\cdots,y_{\frac{n-1}{2}},z_{n} \right \rangle. 
\end{align*}
Since $g(S)\cap \left \langle x_{1},\cdots,x_{k} \right \rangle=0$, it follows that ker$(\pi)=\left \langle x_{1},\cdots,x_{k} \right \rangle$. This induces an injective embedding $\pi \circ i$ by the following diagram:
\begin{center}
\begin{tikzcd}
g(S) \arrow[rd,hook,"\pi \circ i"'] \arrow[r,hook, "i"] & \arrow[d,"\pi"] \left \langle x_{1},\cdots,x_{\frac{n-1}{2}},y_{k+1},\cdots,y_{\frac{n-1}{2}},z_{n} \right \rangle \\
& \left \langle x_{k+1},y_{k+1},\cdots,x_{\frac{n-1}{2}},y_{\frac{n-1}{2}},z_{n} \right \rangle
\end{tikzcd}
\end{center}
Since $\text{diag}(1^{n-2k})=\left \langle x_{k+1},y_{k+1},\cdots,x_{\frac{n-1}{2}},y_{\frac{n-1}{2}},z_{n} \right \rangle$, it follows that $S$ is embedded in diag$(1^{n-2k})$.

\smallskip
Conversely, suppose that suppose that $S$ is embedded in $ \text{diag}(1^{n-2k})$. Note that \[\text{diag}(1^{n})=\text{diag}(1^{n-2k})\oplus \bigoplus_{i=1}^{k}\mathbb{H} \supseteq \text{diag}(1^{n-2k})\oplus \left \langle x_{1},\cdots,x_{k} \right \rangle,\] 
where $\mathbb{H}=\left \langle x_{i},y_{i} \right \rangle$, $B(x_{i},y_{i})=\delta_{i,j}$ and $B(x_{i},x_{j})=B(y_{i},y_{j})=0$. Since $W$ is isometrically isomorphic to $\left \langle x_{1},\cdots,x_{k} \right \rangle$, this completes the proof. 

\smallskip
When $n$ is even, we omit the proof because it is similar to the case in which $n$ is odd.
\end{proof}

Suppose that a quadratic subspace $W$ of $(\mathbb{Q}_{p}^{n},\text{diag}(1^{n}))$ is isometrically isomorphic to diag$(0^{k})$. Then by Theorem \ref{main}, we derive the same result as in Corollary \ref{0}: dim$W\leq n/2$. In particular, if $n$ is even, a totally isotropic subspace $W$ with the dimension $n/2$ in $(\mathbb{Q}_{p}^{n},\text{diag}(1^{n}))$ can be constructed by 
\[\left \langle (G,H,0,\cdots,0),(0,0,G,H,0,\cdots,0), \cdots,(0,0,\cdots,0,0,G,H) \right \rangle.\]

\begin{corollary}
Suppose that $p \equiv 1$ mod $4$. For each $n$ and $k$, 
\begin{itemize}
    \item diag$(1)\oplus$diag$(0^{k})$ can be embedded in diag$(1^{n})$ if and only if $k \leq \frac{n-1}{2}$,
    \item diag$(\lambda)\oplus$diag$(0^{k})$ can be embedded in diag$(1^{n})$ if and only if $k \leq \frac{n-2}{2}$,
    \item diag$(p)\oplus$diag$(0^{k})$ can be embedded in diag$(1^{n})$ if and only if $k \leq \frac{n-2}{2}$,
    \item diag$(\lambda p)\oplus$diag$(0^{k})$ can be embedded in diag$(1^{n})$ if and only if $k \leq \frac{n-2}{2}$,
    \item diag$(1,1)\oplus$diag$(0^{k})$ can be embedded in diag$(1^{n})$ if and only if $k \leq \frac{n-2}{2}$,
    \item diag$(1,\lambda )\oplus$diag$(0^{k})$ can be embedded in diag$(1^{n})$ if and only if $k \leq \frac{n-3}{2}$,
    \item diag$(1,p)\oplus$diag$(0^{k})$ can be embedded in diag$(1^{n})$ if and only if $k \leq \frac{n-3}{2}$,
    \item diag$(1,\lambda p)\oplus$diag$(0^{k})$ can be embedded in diag$(1^{n})$ if and only if $k \leq \frac{n-3}{2}$,
    \item diag$(\lambda,p)\oplus$diag$(0^{k})$ can be embedded in diag$(1^{n})$ if and only if $k \leq \frac{n-4}{2}$,
    \item diag$(\lambda,\lambda p)\oplus$diag$(0^{k})$ can be embedded in diag$(1^{n})$ if and only if $k \leq \frac{n-4}{2}$,
    \item diag$(p,\lambda p)\oplus$diag$(0^{k})$ can be embedded in diag$(1^{n})$ if and only if $k \leq \frac{n-4}{2}$,
    \item diag$(1,1,1)\oplus$diag$(0^{k})$ can be embedded in diag$(1^{n})$ if and only if $k \leq \frac{n-3}{2}$, 
    \item diag$(1,1,\lambda )\oplus$diag$(0^{k})$ can be embedded in diag$(1^{n})$ if and only if $k \leq \frac{n-4}{2}$,
    \item diag$(1,1,p)\oplus$diag$(0^{k})$ can be embedded in diag$(1^{n})$ if and only if $k \leq \frac{n-4}{2}$,
    \item diag$(1,1,\lambda p)\oplus$diag$(0^{k})$ can be embedded in diag$(1^{n})$ if and only if $k \leq \frac{n-4}{2}$,
    \item diag$(1,\lambda ,p)\oplus$diag$(0^{k})$ can be embedded in diag$(1^{n})$ if and only if $k \leq \frac{n-5}{2}$,
    \item diag$(1,\lambda ,\lambda p)\oplus$diag$(0^{k})$ can be embedded in diag$(1^{n})$ if and only if $k \leq \frac{n-5}{2}$,
    \item diag$(1,p,\lambda p)\oplus$diag$(0^{k})$ can be embedded in diag$(1^{n})$ if and only if $k \leq \frac{n-5}{2}$,
    \item diag$(\lambda ,p,\lambda p)\oplus$diag$(0^{k})$ can be embedded in diag$(1^{n})$ if and only if $k \leq \frac{n-6}{2}$.
\end{itemize}
\end{corollary}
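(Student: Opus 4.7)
The plan is to combine Theorem \ref{main} with the explicit embedding analysis worked out earlier in this subsection, so the corollary reduces to bookkeeping. For a nondegenerate quadratic space $S$, write $m(S)$ for the smallest $n$ such that $S$ embeds in $\text{diag}(1^n)$. Applying Theorem \ref{main} to $A=S\oplus\text{diag}(0^k)$, the embedding $A\hookrightarrow \text{diag}(1^n)$ is equivalent to $S\hookrightarrow \text{diag}(1^{n-2k})$, which holds if and only if $n-2k\geq m(S)$, i.e.\ $k\leq (n-m(S))/2$. Each of the nineteen listed bounds is then of this form; I just have to read off $m(S)$ for every $S$ appearing.

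Next I would tabulate $m(S)$. From the explicit orthogonal-basis constructions already given: $m(\text{diag}(1))=1$; $m(\text{diag}(\lambda))=m(\text{diag}(p))=m(\text{diag}(\lambda p))=2$; $m(\text{diag}(1,1))=2$; $m(\text{diag}(1,\ast))=3$ for $\ast\in\{\lambda,p,\lambda p\}$; $m(\text{diag}(\lambda,p))=m(\text{diag}(\lambda,\lambda p))=m(\text{diag}(p,\lambda p))=4$; $m(\text{diag}(1,1,1))=3$; $m(\text{diag}(1,1,\ast))=4$ for $\ast\in\{\lambda,p,\lambda p\}$; $m(\text{diag}(1,\ast,\ast\ast))=5$ for the three pairs in Lemma \ref{cla2}; and $m(\text{diag}(\lambda,p,\lambda p))=6$. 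Plugging into $k\leq (n-m(S))/2$ reproduces the nineteen bullet points exactly.

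Two things have to be verified for each $S$: the sufficiency $m(S)\leq$ the claimed value, and the necessity $m(S)\geq$ the claimed value. Sufficiency is immediate from the explicit spanning vectors exhibited earlier in the subsection (using Lemma \ref{co2} to realize $\lambda, p, \lambda p$ as sums of two squares in $\mathbb{Q}_p$). Necessity is where the invariants enter. For instance, to rule out $\text{diag}(\lambda,p,\lambda p)\hookrightarrow \text{diag}(1^5)$, note that both sides have discriminant $1$; Witt cancellation would force $\text{diag}(1^5)\cong \text{diag}(\lambda,p,\lambda p)\oplus T$ for some nondegenerate $2$-dimensional $T$ of discriminant $1$, and since (by Lemma \ref{cla1}) the only such $T$ for $p\equiv 1\pmod 4$ is $\text{diag}(1,1)$, comparing Hasse invariants via Table~1 gives $1=\epsilon(\text{diag}(1^5))$ on one side and $(\lambda,p)(\lambda,\lambda p)(p,\lambda p)=-1$ on the other, a contradiction. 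The other lower bounds follow from entirely analogous discriminant-plus-Hasse-invariant comparisons, which also match the counting remark that the number of isomorphism classes of $j$-dimensional nondegenerate subspaces of $\text{diag}(1^n)$ equals the number of classes of $(n-j)$-dimensional ones via orthogonal complementation.

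The main obstacle is not conceptual but organizational: there are nineteen cases, and the necessity half of each relies on a small but distinct Hilbert-symbol computation read off Table~1. The cleanest presentation is probably a single lemma asserting the values of $m(S)$ for every $S$ in Lemmas \ref{cla1} and \ref{cla2}, proved once by discriminant/Hasse-invariant bookkeeping, after which the corollary follows immediately from Theorem \ref{main}.
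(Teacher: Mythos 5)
Your proposal is correct and follows essentially the same route as the paper: the corollary is intended as an immediate consequence of Theorem \ref{main} combined with the minimal embedding dimensions $m(S)$ already established in the subsection (explicit bases for sufficiency, orthogonal-complement counting and discriminant/Hasse-invariant comparisons for necessity). Your sample exclusion of $\text{diag}(\lambda,p,\lambda p)$ from $\text{diag}(1^{5})$ checks out against Table~1, though note that the decomposition $\text{diag}(1^{5})\cong S\oplus S^{\perp}$ is just orthogonal complementation of a nondegenerate subspace rather than an application of Witt cancellation.
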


Notice that any nondegenerate quadratic space of dimension $k$ is isometrically isomorphic to diag$(1^{k-3})\oplus Q$, where $Q$ is a $3$-dimensional nondegenerate quadratic space. At this point, we have all described embedding conditions for nondegenerate and degenerate quadratic spaces. Therefore, given any quadratic space $V$, we can determine if $V$ can be embedded in $(\mathbb{Q}_{p}^{n},\text{diag}(1^{n} ))$ when $p \equiv 1$ mod $4$.

\subsection{Embeddings in $(\mathbb{Q}_{p}^{n},\text{diag}(1^{n-1},\lambda ))$ when $p\equiv 1$ mod $4$}
We first consider the case of nondegenerate quadratic spaces. Let us consider $1$-dimensional nondegenerate quadratic spaces of $(\mathbb{Q}_{p}^{n},\text{diag}(1^{n-1},\lambda ))$. Suppose that $n=2$. Then there are $2$ types of $1$-dimensional nondegenerate quadratic spaces of $(\mathbb{Q}_{p}^{n},\text{diag}(1^{n-1},\lambda ))$: $\text{diag}(1)=\left \langle (1,0) \right \rangle$ and $\text{diag}(\lambda)=\left \langle (0,1) \right \rangle$. Unlike the case in which the embedded space is $(\mathbb{Q}_{p}^{2},\text{diag}(1,1))$, the spaces diag$(p)$ and diag$(\lambda p)$ do not exist in $(\mathbb{Q}_{p}^{2},\text{diag}(1,\lambda))$.

\begin{lemma}\label{p}
If $p \equiv 1$ mod $4$, diag$(p)$ does not exist in $(\mathbb{Q}_{p}^{2},\text{diag}(1,\lambda ))$.
\end{lemma}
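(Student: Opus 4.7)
The plan is to argue by contradiction using the complete set of invariants (dimension, discriminant, Hasse invariant), together with Witt's cancellation theorem and the Hilbert symbol table for $p \equiv 1 \bmod 4$ that the paper has already tabulated.

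First I would translate "embedding" into a decomposition statement. Assume for contradiction that $\text{diag}(p)$ is embedded in $(\mathbb{Q}_p^2, \text{diag}(1,\lambda))$, so $\text{diag}(1,\lambda)$ contains an anisotropic line on which the form is in the square class of $p$. Since $\text{diag}(1,\lambda)$ is nondegenerate and $2$-dimensional, the orthogonal complement of this line is a nondegenerate $1$-dimensional subspace, which is necessarily of the form $\text{diag}(a)$ for some $a \in \mathbb{Q}_p^\times/(\mathbb{Q}_p^\times)^2$. Thus by Witt's cancellation theorem (Theorem \ref{wc}) we obtain the isometry
\[
\text{diag}(1,\lambda) \;\cong\; \text{diag}(p,a).
\]

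Next I would pin down $a$ via the discriminant. Computing determinants on both sides gives the equality of square classes $\lambda \equiv p a \pmod{(\mathbb{Q}_p^\times)^2}$, so by Lemma \ref{1dim} we must have $a \equiv \lambda p$. Hence $\text{diag}(1,\lambda) \cong \text{diag}(p, \lambda p)$.

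Finally I would compare Hasse invariants. On the left,
\[
\epsilon(\text{diag}(1,\lambda)) = (1,\lambda)_p = 1,
\]
while on the right, reading off the table for $p \equiv 1 \bmod 4$,
\[
\epsilon(\text{diag}(p,\lambda p)) = (p,\lambda p)_p = -1.
\]
These invariants disagree, contradicting the isometry forced by Witt's cancellation. Therefore no such embedding exists.

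The only subtle step is the reduction to a Witt-cancellation equation: one must observe that a $1$-dimensional subspace representing $p$ is automatically nondegenerate (since $p \neq 0$), so its orthogonal complement in the nondegenerate space $\text{diag}(1,\lambda)$ is also nondegenerate and splits off orthogonally. After that, everything is bookkeeping with the Hilbert-symbol table already established, and I do not anticipate any serious obstacle.
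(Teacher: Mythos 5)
Your proof is correct, but it takes a genuinely different route from the paper's. The paper argues directly at the level of the equation $z^{2}+\lambda w^{2}=p$: writing $z=p^{\alpha}u$, $w=p^{\beta}v$ with $u,v\in\mathbb{Z}_{p}^{\times}$ and comparing $p$-adic valuations, it reduces everything to the fact that $-\lambda$ is a nonsquare modulo $p$ (which is where $p\equiv 1$ mod $4$ enters). Your argument instead runs through the classification machinery: split off the nondegenerate line representing $p$, identify its orthogonal complement as $\text{diag}(\lambda p)$ via the discriminant, and derive a contradiction from the Hasse invariants $\epsilon(\text{diag}(1,\lambda))=1$ versus $\epsilon(\text{diag}(p,\lambda p))=(p,\lambda p)_{p}=-1$, read off the table for $p\equiv 1$ mod $4$. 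This is precisely the strategy the paper reserves for the companion Lemma \ref{alot} (the $\text{diag}(\lambda p)$ case), which it bootstraps from Lemma \ref{p}; your approach has the advantage of handling both lemmas uniformly and dispensing with the separate valuation computation, at the cost of leaning on the tabulated Hilbert symbols rather than being self-contained. One small correction: the splitting $\text{diag}(1,\lambda)\cong\text{diag}(p)\oplus\text{diag}(a)$ is just the orthogonal-complement decomposition of a nondegenerate line inside a nondegenerate space, not an application of Witt's cancellation theorem; cancellation plays no essential role in your argument, and the rest (discriminant forcing $a\equiv\lambda p$, then the Hasse invariant comparison) is sound.
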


\begin{proof}
Suppose that there are $z,w$ in $\mathbb{Q}_{p}$ such that $z^{2}+\lambda w^{2}=p$. Let us write $z=p^{\alpha}u$ and $w=p^{\beta}v$, where $u,v$ are in $Z_{p}^{\times}$. If $\alpha<\beta$, then we have
\[p=z^{2}+\lambda w^{2}=p^{2\alpha}u^{2}+\lambda p^{2\beta}v^{2}=p^{2\alpha}(u^{2}+\lambda p^{2(\beta -\alpha)}v^{2}).\]
Since $u^{2}+\lambda p^{2(\beta-\alpha)}v^{2}$ is in $\mathbb{Z}_{p}$, this contradicts the fact that $\nu_{p}(p)=1$ and $\nu_{p}(p^{2\alpha})=2\alpha$, where $\nu_{p}$ is the $p$-adic norm. If $\alpha=\beta=-k$, where $k \geq 1$, then we obtain $p=p^{2\alpha}(u^{2}+\lambda v^{2})$, which implies $u^{2}+\lambda v^{2}=p^{1+2k}$. Thus we have $u^{2}+\lambda v^{2} \equiv 0$ mod $p$, which contradicts $- \lambda$ is a nonsquare.
\end{proof}

\begin{lemma}\label{alot}
If $p \equiv 1$ mod $4$, diag$(\lambda p)$ does not exist in $(\mathbb{Q}_{p}^{2},\text{diag}(1,\lambda ))$.
\end{lemma}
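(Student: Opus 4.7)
The plan is to mimic the proof of Lemma \ref{p} almost verbatim, since the statement differs only in that the target value is $\lambda p$ rather than $p$, while the two underlying inputs — the $p$-adic valuation of the right-hand side is odd, and $-\lambda$ is a nonsquare mod $p$ when $p \equiv 1 \pmod 4$ — remain in force. I would again work by contradiction, showing that under any factorization of a putative solution by powers of $p$, the valuation of $z^2 + \lambda w^2$ is forced to be even, whereas $\nu_p(\lambda p) = 1$.

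First I would assume there exist $z,w \in \mathbb{Q}_p$ with $z^2 + \lambda w^2 = \lambda p$. Note that $\lambda \in \mathbb{Z}_p^{\times}$, so $\nu_p(\lambda p) = 1$. We may assume both $z$ and $w$ are nonzero: if $z=0$ then $w^2 = p$, contradicting $\nu_p(p) = 1$; and if $w=0$ then $z^2 = \lambda p$, likewise impossible. Write $z = p^{\alpha}u$ and $w = p^{\beta}v$ with $u,v \in \mathbb{Z}_p^{\times}$.

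I would then split into two cases exactly as in Lemma \ref{p}. In the case $\alpha \neq \beta$, the summands $z^2$ and $\lambda w^2$ have distinct even valuations $2\alpha$ and $2\beta$, so $\nu_p(z^2 + \lambda w^2) = \min(2\alpha, 2\beta)$ is even, contradicting $\nu_p(\lambda p) = 1$. In the case $\alpha = \beta$, we factor $z^2 + \lambda w^2 = p^{2\alpha}(u^2 + \lambda v^2)$. Because $p \equiv 1 \pmod 4$, $-1$ is a square mod $p$, so $-\lambda$ is a nonsquare mod $p$; hence $u^2 + \lambda v^2 \not\equiv 0 \pmod p$, giving $\nu_p(u^2 + \lambda v^2) = 0$ and thus $\nu_p(z^2+\lambda w^2) = 2\alpha$ — again even, contradicting $\nu_p(\lambda p)=1$.

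The only substantive step is the equal-valuation case, and the only place the hypothesis $p \equiv 1 \pmod 4$ is used is in concluding that $-\lambda$ is a nonsquare mod $p$; this is the standard implication ``$-1$ square and $\lambda$ nonsquare imply $-\lambda$ nonsquare.'' I do not expect any genuine obstacle beyond making sure both $z,w$ are nonzero so the valuations $\alpha,\beta$ are well-defined, which is handled by the direct check above. As an aside, an alternative two-line argument is available via Witt cancellation: embedding $\mathrm{diag}(\lambda p)$ into $\mathrm{diag}(1,\lambda)$ would force $\mathrm{diag}(1,\lambda) \cong \mathrm{diag}(\lambda p, p)$ after matching discriminants, and a Hilbert-symbol computation from the $p \equiv 1 \pmod 4$ table shows the Hasse invariants are $+1$ and $-1$ respectively, contradicting the classification. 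I prefer the elementary valuation proof for consistency with the preceding lemma.
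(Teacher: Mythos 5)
Your valuation argument is correct and complete: in the case $\alpha\neq\beta$ the two summands have distinct even valuations so the sum has even valuation, in the case $\alpha=\beta$ the unit $u^{2}+\lambda v^{2}$ is nonzero mod $p$ precisely because $-\lambda$ is a nonsquare when $p\equiv 1\pmod 4$, and you correctly dispose of $z=0$ and $w=0$ first so that the valuations are defined. The paper, however, takes a different and non-computational route: assuming diag$(\lambda p)$ embeds in diag$(1,\lambda)$, its orthogonal complement is a one-dimensional form diag$(c)$ with $\lambda p\cdot c\equiv \lambda$ modulo squares, hence $c\equiv p$; Witt's cancellation theorem then forces diag$(p)$ to embed in diag$(1,\lambda)$, contradicting Lemma~\ref{p}. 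So the paper leverages the preceding lemma rather than repeating its proof. Your approach buys self-containedness (no appeal to Witt or to the classification by invariants), at the cost of duplicating the computation; the paper's buys brevity and showcases the orthogonal-complement technique that is reused many times in the rest of Section 3. Your parenthetical alternative --- discriminant pins the complement down to diag$(p)$, and then the Hasse invariants of diag$(\lambda p,p)$ and diag$(1,\lambda)$ are $-1$ and $+1$ --- is essentially the paper's argument with the contradiction finished via the Hasse invariant instead of via Lemma~\ref{p}; it is also correct, and in fact it is stated more carefully than the paper's own phrasing, which asserts diag$(\lambda p)\oplus\text{diag}(p)=\text{diag}(1,\lambda)$ ``by comparing the discriminants and the Hasse-invariants'' even though only the discriminants agree (the disagreement of the Hasse invariants being exactly the point).
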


\begin{proof}
Suppose that diag$(\lambda p)$ exists. Then we find diag$(\lambda p)\oplus \text{diag}(p)=\text{diag}(1,\lambda)$ by comparing the discriminants and  the Hasse-invariants of the left and the right hand sides. On the other hand, we also have diag$(\lambda p)\oplus (\text{diag}(\lambda p))^{\perp}=\text{diag}(1,\lambda)$. By Witt's Theorem, we obtain diag$( p)=(\text{diag}(\lambda p))^{\perp}$, which contradicts the fact that diag$(p)$ does not exist in $(\mathbb{Q}_{p}^{2},\text{diag}(1,\lambda))$ by Lemma \ref{p}.
\end{proof}

If $n=3$, then diag$(p)=\left \langle (C,D,0) \right \rangle$ and diag$(\lambda p)=\left \langle (E,F,0) \right \rangle$ also exist. Similarly, when $n \geq 4$, as shown in section $1$, all $4$ types of $1$-dimensional nondegenerate quadratic spaces can be embedded in $\mathbb{Q}_{p}^{n}$. 

\smallskip
Let us think about $2$-dimensional quadratic spaces. Suppose that $n=3$. Since there are $4$ types of $1$-dimensional nondegenerate quadratic subspaces of $(\mathbb{Q}_{p}^{3},\text{diag}(1,1,\lambda))$, there should be $4$ types of $2$-dimensional nondegenerate quadratic subspaces of $(\mathbb{Q}_{p}^{3},\text{diag}(1,1,\lambda))$. Among the $7$ types from Theorem \ref{cla1}, only $4$ are determined by finding their orthogonal complements. Thus we find the following $4$ types:
\begin{itemize}
    \item diag$(1,1)=\left \langle (1,0,0),(0,1,0) \right \rangle$,
    \item diag$(1,\lambda)=( \text{diag}(1) )^{\perp}$,
    \item diag$(\lambda,p)=( \text{diag}(p) )^{\perp}$,
    \item diag$(\lambda,\lambda p)=( \text{diag}(\lambda p) )^{\perp}$.
\end{itemize}
If $n=4$, then the following $2$ other cases also exist as follows:
\begin{itemize}
    \item diag$(1,p)=( \text{diag}(\lambda,p) )^{\perp}$,
    \item diag$(1,\lambda p)=( \text{diag}(\lambda,\lambda p) )^{\perp}$.
\end{itemize}
The space diag$(p,\lambda p)$ does not exist in $(\mathbb{Q}_{p}^{4},\text{diag}(1^{3},\lambda))$. The proof is similar to Lemma \ref{alot}. If $n=5$, diag$(p,\lambda p)=\left \langle (C,D,0,0,0),(0,0,E,F,0) \right \rangle$ exists. Thus, if $n \geq 5$, all $7$ types of $2$-dimensional nondegenerate quadratic spaces can be embedded in $(\mathbb{Q}_{p}^{n},\text{diag}(1^{n-1},\lambda))$.

\smallskip
Let us think about $3$-dimensional nondegenerate quadratic spaces. Suppose that $n=4$. Since there are $4$ types of $1$-dimensional nondegenerate quadratic subspaces of $(\mathbb{Q}_{p}^{4},\text{diag}(1^{3},\lambda))$, there have to be $4$ types of $3$-dimensional nondegenerate quadratic subspaces of $(\mathbb{Q}_{p}^{4},\text{diag}(1^{3},\lambda))$. Of the $8$ types from Theorem \ref{cla2}, we find the following $4$: 
\begin{itemize}
    \item diag$(1,1,1)=\left \langle (1,0,0,0),(0,1,0,0),(0,0,1,0) \right \rangle$,
    \item diag$(1,1,\lambda)=( \text{diag}(1) )^{\perp}$,
    \item diag$(1,\lambda,p)=( \text{diag}(p) )^{\perp}$,
    \item diag$(1,\lambda,\lambda p)=( \text{diag}(\lambda p) )^{\perp}$.
\end{itemize}
If $n=5$, since there are $7$ types of $2$-dimensional nondegenerate quadratic spaces, there must be $7$ types of $3$-dimensional quadratic subspaces in $(\mathbb{Q}_{p}^{5},\text{diag}(1^{4},\lambda))$. Since we already know that the $4$ types exist if $n=4$, we only need to find $3$ other types. Similarly to above, it is not hard to find them:
\begin{itemize}
    \item diag$(1,1,1)=( \text{diag}(1,\lambda) )^{\perp}$,
    \item diag$(1,1,\lambda)=( \text{diag}(1,1) )^{\perp}$,
    \item diag$(1,\lambda,p)=( \text{diag}(1,p) )^{\perp}$,
    \item diag$(1,\lambda,\lambda p)=( \text{diag}(p,\lambda p) )^{\perp}$,
    \item diag$(\lambda,p,\lambda p)=( \text{diag}(p,\lambda p) )^{\perp}$,
    \item diag$(1,1,p)=( \text{diag}(\lambda,p) )^{\perp}$,
    \item diag$(1,1,\lambda p)=( \text{diag}(\lambda,\lambda p) )^{\perp}$.
\end{itemize}
If $n=6$, then diag$(1,p,\lambda p)=( \text{diag}(1,\lambda,\lambda p) )^{\perp}=\left \langle (1,0,0,0,0,0),(0,C,D,0,0,0),(0,0,0,E,F,0) \right \rangle$ also exists. Thus all $8$ types of $3$-dimensional nondegenerate quadratic spaces can be embedded in $(\mathbb{Q}_{p}^{n},\text{diag}(1^{n-1},\lambda))$ when $n \geq 6$.

\smallskip
We will now think about degenerate quadratic spaces. We prove the following theorem using a similar method to the one used in Theorem \ref{main}.
\begin{theorem}\label{main2}
Suppose that $p\equiv 1$ mod $4$. Let $A$ be a degenerate quadratic space isometrically isomorphic to $\text{diag}(0^{k}) \oplus S$, where $S$ is a nondegenerate quadratic space. Then $A$ is embedded in diag$(1^{n-1},\lambda)$ if and only if $S$ is embedded in $\text{diag}(1^{n-2k-1},\lambda )$ (and $k \leq (n-2)/2$ when $n$ is even). 
\end{theorem}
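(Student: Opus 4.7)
The plan is to transplant the proof of Theorem \ref{main} to the Lorentzian target $\text{diag}(1^{n-1},\lambda)$, and the main subtlety I anticipate is a parity-of-$n$ case distinction caused by the extra $\lambda$ coefficient. Since $p\equiv 1\bmod 4$, the form $\text{diag}(1,1)$ is hyperbolic, but $\text{diag}(1,\lambda)$ is \emph{anisotropic} (because $-\lambda$ is a nonsquare in $\mathbb{Q}_p$). Thus when $n$ is odd I have
\[\text{diag}(1^{n-1},\lambda)\cong\bigoplus_{i=1}^{(n-1)/2}\mathbb{H}\oplus\text{diag}(\lambda),\]
while when $n$ is even I have
\[\text{diag}(1^{n-1},\lambda)\cong\bigoplus_{i=1}^{(n-2)/2}\mathbb{H}\oplus\text{diag}(1,\lambda).\]
By Corollary \ref{0}, the maximal totally isotropic subspaces have dimensions $(n-1)/2$ and $(n-2)/2$ respectively; in particular, if $A$ contains a totally isotropic subspace of dimension $k$ and $n$ is even, then automatically $k\leq(n-2)/2$, which accounts for the extra hypothesis in the even case.

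For the forward direction, suppose $A$ embeds in $\text{diag}(1^{n-1},\lambda)$. Fix a basis $x_i,y_i$ ($1\leq i\leq(n-1)/2$ or $(n-2)/2$ depending on parity) for the hyperbolic summands with $B(x_i,y_j)=\delta_{ij}$ and $B(x_i,x_j)=B(y_i,y_j)=0$, together with either $z_n$ of norm $\lambda$ (odd case) or $z_{n-1}$ of norm $1$ and $z_n$ of norm $\lambda$ (even case). Writing $A=W\oplus S$ with radical $W\cong\text{diag}(0^k)$ and nondegenerate complement $S$, Witt's extension theorem (Theorem \ref{we}) produces $g\in O(\mathbb{Q}_p^n)$ with $g(W)=\langle x_1,\ldots,x_k\rangle$. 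Since $S\perp W$ in $A$, one has $g(S)\subset\langle x_1,\ldots,x_k\rangle^\perp$ and $g(S)\cap\langle x_1,\ldots,x_k\rangle=0$. Hence the orthogonal projection $\pi$ onto the span of the $x_j,y_j$ with $j>k$ together with the $z$'s restricts to an isometric embedding on $g(S)$, exactly as in the projection diagram of Theorem \ref{main}; a direct dimension count shows this target is $\text{diag}(1^{n-2k-1},\lambda)$ in both parities.

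For the converse, assume $S$ embeds in $\text{diag}(1^{n-2k-1},\lambda)$. Decompose
\[\text{diag}(1^{n-1},\lambda)=\text{diag}(1^{n-2k-1},\lambda)\oplus\text{diag}(1^{2k})\cong\text{diag}(1^{n-2k-1},\lambda)\oplus\bigoplus_{i=1}^{k}\mathbb{H},\]
using $\text{diag}(1,1)\cong\mathbb{H}$; in the even case the hypothesis $k\leq(n-2)/2$ is precisely what makes $n-2k-1\geq 1$, so the right-hand side is a legitimate quadratic space. Pick a totally isotropic basis $x_1,\ldots,x_k$ inside the $\bigoplus^k\mathbb{H}$ summand and take the orthogonal direct sum with the embedded copy of $S$; the result is an embedding of $A=W\oplus S$. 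The main obstacle is purely bookkeeping: tracking the parity of $n$ uniformly through both directions and locating the extra anisotropic factor $\text{diag}(1,\lambda)$ that appears only in the even case, which is the sole source of the additional restriction $k\leq(n-2)/2$.
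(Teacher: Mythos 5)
Your proposal is correct and follows essentially the same route as the paper's proof: decompose $\text{diag}(1^{n-1},\lambda)$ into hyperbolic planes plus an anisotropic remainder (using $\text{diag}(1,1)\cong\mathbb{H}$ for $p\equiv 1\bmod 4$), move the radical $W$ onto a standard totally isotropic subspace via Witt's extension theorem, and project; the converse is the same orthogonal-sum construction. Your explicit treatment of both parities and your observation that $k\leq(n-2)/2$ is forced in the even case because $\text{diag}(1,\lambda)$ is anisotropic match the paper's argument (the paper writes out only the even case and omits the odd one as similar).
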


\begin{proof}
Suppose that $A$ is embedded in diag$(1^{n-1},\lambda)$ and $n$ is even. We first note that $\mathbb{H}$ is isometrically isomorphic to diag$(1,1)$ when $p \equiv 1$ mod $4$. Thus we have $\text{diag}(1^{n-1},\lambda)=\bigoplus_{i=1}^{\frac{n-2}{2}}\mathbb{H}\oplus \text{diag}(1,\lambda )$. 

\smallskip
Let us choose coordinates $x_{i},y_{i}$ such that $B(x_{i},y_{j})=\delta_{i,j}$, $B(x_{i},x_{j})=B(y_{i},y_{j})=0$ for any $i,j$. Then there exists an isotropic subspace $W$ such that $W \simeq \text{diag}(0^{k})\simeq \left \langle x_{1},\cdots,x_{k} \right \rangle$ and $A=W\oplus S$. By Witt's Theorem, there is an isometry $g$ in $O(n,q)$ such that $g(W)=\left \langle x_{1},\cdots,x_{k} \right \rangle$. Thus we have 
\[g(S) \cap g(W)=0 \text{ and } g(S) \subset \left \langle x_{1},\cdots,x_{k} \right \rangle^{\perp}=\left \langle x_{1},\cdots,x_{k},\cdots,x_{\frac{n-2}{2}},y_{k+1},\cdots,y_{\frac{n-2}{2}} \right \rangle.\]
Additionally, since diag$(1,\lambda)$ does not contain isotropic vectors, $g(W)$ should be contained in $\bigoplus_{i=1}^{(n-2)/2}\mathbb{H}$, which implies $k \leq (n-2)/2$. 

\smallskip
Let us define an orthogonal projection $\pi$ and an inclusion map $i$ as follows:
\begin{align*}
  \pi&:\left \langle x_{1},\cdots,x_{\frac{n-2}{2}},y_{k+1},\cdots,y_{\frac{n-2}{2}} \right \rangle \longrightarrow \left \langle x_{k+1},y_{k+1},\cdots,x_{\frac{n-2}{2}},y_{\frac{n-2}{2}} \right \rangle\\
  i&:g(S) \longrightarrow \left \langle x_{1},\cdots,x_{\frac{n-2}{2}},y_{k+1},\cdots,y_{\frac{n-2}{2}} \right \rangle. 
\end{align*}
Since $g(S)\cap g(W)=0$, it follows that ker$(\pi)=g(W)$. This induces an injective embedding $\pi \circ i$ by the following diagram:
\begin{center}
\begin{tikzcd}
g(S) \arrow[rd,hook,"\pi \circ i"'] \arrow[r,hook, "i"] & \arrow[d,"\pi"] \left \langle x_{1},\cdots,x_{\frac{n-2}{2}},y_{k+1},\cdots,y_{\frac{n-2}{2}} \right \rangle \\
& \left \langle x_{k+1},y_{k+1},\cdots,x_{\frac{n-2}{2}},y_{\frac{n-2}{2}} \right \rangle
\end{tikzcd}
\end{center}
Since $\text{diag}(1^{n-2k-2})=\left \langle x_{k+1},y_{k+1},\cdots,x_{\frac{n-2}{2}},y_{\frac{n-2}{2}} \right \rangle$, it follows that $S$ is embedded in diag$(1^{n-2k-2})\oplus \text{diag}(1,\lambda)=\text{diag}(1^{n-2k-1},\lambda)$.

\smallskip
Conversely, suppose that $S$ is embedded in $ \text{diag}(1^{n-2k-1},\lambda)$. Note that \[\text{diag}(1^{n-1},\lambda)=\text{diag}(1^{n-2k-1},\lambda)\oplus \bigoplus_{i=1}^{k}\mathbb{H} \supseteq \text{diag}(1^{n-2k-1},\lambda)\oplus \left \langle x_{1},\cdots,x_{k} \right \rangle,\] 
where $\mathbb{H}=\left \langle x_{i},y_{i} \right \rangle$, $B(x_{i},y_{i})=\delta_{i,j}$ and $B(x_{i},x_{j})=B(y_{i},y_{j})=0$. Since $W$ is isometrically isomorphic to $\left \langle x_{1},\cdots,x_{k} \right \rangle$, this completes the proof. 

\smallskip
When $n$ is odd, we omit the proof because it is similar to the case of $n$ being even.
\end{proof}

Suppose that a quadratic subspace $W$ of $(\mathbb{Q}_{p}^{n},\text{diag}(1^{n}))$ is isometrically isomorphic to diag$(0^{k})$. Then dim$W\leq n/2$. If $n$ is even, unlike the case in which the embedded space is $(\mathbb{Q}_{p}^{n},\text{diag}(1^{n}))$, dim$W$ cannot be $n/2$. If dim$W=n/2$, by Proposition \ref{hyper}, there is a totally isotropic subspace $W$ such that $\left \langle W,W' \right \rangle=\bigoplus_{i=1}^{n/2}\mathbb{H}=\text{diag}(1^{n-1},\lambda)$, which contradicts the fact that the discriminant of $\bigoplus_{i=1}^{n/2}\mathbb{H}$ is a square but the discriminant of $\text{diag}(1^{n-1},\lambda)$ is a nonsquare. 

\begin{corollary}
Suppose that $p \equiv 1$ mod $4$. For each $n$ and $k$, 
\begin{itemize}
    \item diag$(1)\oplus$diag$(0^{k})$ can be embedded in diag$(1^{n-1},\lambda)$ if and only if $k \leq \frac{n-2}{2}$,
    \item diag$(\lambda)\oplus$diag$(0^{k})$ can be embedded in diag$(1^{n-1},\lambda)$ if and only if $k \leq \frac{n-1}{2}$,
    \item diag$(p)\oplus$diag$(0^{k})$ can be embedded in diag$(1^{n-1},\lambda)$ if and only if $k \leq \frac{n-3}{2}$,
    \item diag$(\lambda p)\oplus$diag$(0^{k})$ can be embedded in diag$(1^{n-1},\lambda)$ if and only if $k \leq \frac{n-3}{2}$,
    \item diag$(1,1)\oplus$diag$(0^{k})$ can be embedded in diag$(1^{n-1},\lambda)$ if and only if $k \leq \frac{n-3}{2}$,
    \item diag$(1,\lambda )\oplus$diag$(0^{k})$ can be embedded in diag$(1^{n-1},\lambda)$ if and only if $k \leq \frac{n-2}{2}$,
    \item diag$(1,p)\oplus$diag$(0^{k})$ can be embedded in diag$(1^{n-1},\lambda)$ if and only if $k \leq \frac{n-4}{2}$,
    \item diag$(1,\lambda p)\oplus$diag$(0^{k})$ can be embedded in diag$(1^{n-1},\lambda)$ if and only if $k \leq \frac{n-4}{2}$,
    \item diag$(\lambda,p)\oplus$diag$(0^{k})$ can be embedded in diag$(1^{n-1},\lambda)$ if and only if $k \leq \frac{n-3}{2}$,
    \item diag$(\lambda,\lambda p)\oplus$diag$(0^{k})$ can be embedded in diag$(1^{n-1},\lambda)$ if and only if $k \leq \frac{n-3}{2}$,
    \item diag$(p,\lambda p)\oplus$diag$(0^{k})$ can be embedded in diag$(1^{n-1},\lambda)$ if and only if $k \leq \frac{n-5}{2}$,
    \item diag$(1,1,1)\oplus$diag$(0^{k})$ can be embedded in diag$(1^{n-1},\lambda)$ if and only if $k \leq \frac{n-4}{2}$, 
    \item diag$(1,1,\lambda )\oplus$diag$(0^{k})$ can be embedded in diag$(1^{n-1},\lambda)$ if and only if $k \leq \frac{n-3}{2}$,
    \item diag$(1,\lambda ,p)\oplus$diag$(0^{k})$ can be embedded in diag$(1^{n-1},\lambda)$ if and only if $k \leq \frac{n-4}{2}$,
    \item diag$(1,\lambda ,\lambda p)\oplus$diag$(0^{k})$ can be embedded in diag$(1^{n-1},\lambda)$ if and only if $k \leq \frac{n-4}{2}$,
    \item diag$(1,1,p)\oplus$diag$(0^{k})$ can be embedded in diag$(1^{n-1},\lambda)$ if and only if $k \leq \frac{n-5}{2}$,
    \item diag$(1,1,\lambda p)\oplus$diag$(0^{k})$ can be embedded in diag$(1^{n-1},\lambda)$ if and only if $k \leq \frac{n-5}{2}$,
    \item diag$(\lambda ,p,\lambda p)\oplus$diag$(0^{k})$ can be embedded in diag$(1^{n-1},\lambda)$ if and only if $k \leq \frac{n-5}{2}$,
    \item diag$(1,p,\lambda p)\oplus$diag$(0^{k})$ can be embedded in diag$(1^{n-1},\lambda)$ if and only if $k \leq \frac{n-6}{2}$.
\end{itemize}
\end{corollary}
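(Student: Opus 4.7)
The plan is to apply Theorem \ref{main2} uniformly across all $19$ bullets, reducing each embedding question to an embedding question for the nondegenerate part $S$. Concretely, let $N(S)$ denote the smallest positive integer such that $S$ embeds in $\text{diag}(1^{N(S)-1}, \lambda)$. Theorem \ref{main2} then says that $A = \text{diag}(0^k) \oplus S$ embeds in $\text{diag}(1^{n-1}, \lambda)$ if and only if $n - 2k \geq N(S)$, equivalently $k \leq (n - N(S))/2$, together with the side condition $k \leq (n-2)/2$ when $n$ is even. The corollary thus reduces to reading off $N(S)$ for each of the $19$ forms and substituting.

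The values of $N(S)$ come directly from the case analyses preceding the corollary. In dimension one, $N(\text{diag}(\lambda)) = 1$, $N(\text{diag}(1)) = 2$, and $N(\text{diag}(p)) = N(\text{diag}(\lambda p)) = 3$; the last two crucially use Lemmas \ref{p} and \ref{alot} to rule out $N = 2$, together with the explicit realizations $(C,D,0)$ and $(E,F,0)$ inside $\text{diag}(1,1,\lambda)$. The seven two-dimensional types are handled by a mix of orthogonal-complement constructions and Witt-cancellation obstructions mirroring Lemma \ref{alot}, producing $N \in \{2,3,4,5\}$; the same pattern gives $N \in \{3,4,5,6\}$ for the eight three-dimensional types. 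Substituting each $N(S)$ into $k \leq (n-N(S))/2$ yields the nineteen stated inequalities.

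The one point that requires genuine care, and which I expect to be the main obstacle, is the parity side condition $k \leq (n-2)/2$ for even $n$: I must check that it never strictly tightens the bound $k \leq (n - N(S))/2$. For every $S$ with $N(S) \geq 2$ it is automatic, since $(n-N(S))/2 \leq (n-2)/2$. The only delicate case is $S = \text{diag}(\lambda)$ with $N(S) = 1$: the main embedding condition alone gives $k \leq (n-1)/2$, and for even $n$ the integer floor of $(n-1)/2$ equals $(n-2)/2$, matching the side condition exactly. Hence the single bound $k \leq (n-1)/2$ (understood as $k \in \mathbb{Z}$) is correct for both parities. The converse direction in each bullet then follows at once from the reverse construction in Theorem \ref{main2}, placing the isotropic part $\text{diag}(0^k)$ inside a hyperbolic summand $\bigoplus_{i=1}^{k} \mathbb{H}$ of $\text{diag}(1^{n-1}, \lambda)$ and completing the embedding.
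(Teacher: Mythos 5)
Your proposal is correct and follows the same route the paper intends: the corollary is meant to be read off directly from Theorem \ref{main2} together with the minimal embedding dimensions $N(S)$ established in the preceding case-by-case analysis of nondegenerate spaces in $(\mathbb{Q}_p^n,\text{diag}(1^{n-1},\lambda))$. Your explicit check that the even-$n$ side condition $k\leq (n-2)/2$ never tightens the bound $k\leq (n-N(S))/2$ (the only borderline case being $S=\text{diag}(\lambda)$, where integrality of $k$ saves the day) is a detail the paper glosses over, and it is handled correctly.
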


Since any nondegenerate quadratic space of dimension $k$ is isometrically isomorphic to diag$(1^{k-3})\oplus Q$, where $Q$ is a $3$-dimensional nondegenerate quadratic space, given any quadratic space $V$, we can determine if $V$ can be embedded in diag$(1^{n-1},\lambda)$ when $p \equiv 1$ mod $4$.

\subsection{Embeddings in $(\mathbb{Q}_{p}^{n},\text{diag}(1^{n} ))$ when $p\equiv 3$ mod $4$}
Let us consider $1$-dimensional nondegenerate quadratic spaces. Suppose that $n=2$. Then there are $2$ types of nondegenerate quadratic subspaces of $(\mathbb{Q}_{p}^{2},\text{diag}(1,1))$: 
\[ \text{diag}(1)=\left \langle (1,0) \right \rangle,
    ~\text{diag}(\lambda)=\left \langle (a,b) \right \rangle .\]
\begin{lemma}
If $p \equiv 3$ mod $4$, diag$(p)$ does not exist in $(\mathbb{Q}_{p}^{2},\text{diag}(1,1))$.
\end{lemma}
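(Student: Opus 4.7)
The plan is to mimic the argument used in Lemma \ref{p}, with the nonsquare $-\lambda$ replaced by $-1$ itself. Suppose for contradiction that there exist $x, y \in \mathbb{Q}_p$ with $x^2 + y^2 = p$. Write $x = p^{\alpha} u$ and $y = p^{\beta} v$ with $u, v \in \mathbb{Z}_p^{\times}$, and without loss of generality assume $\alpha \leq \beta$. The strategy then splits into two cases, each leading to a contradiction by an elementary $p$-adic valuation argument.

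In the case $\alpha < \beta$, I would factor out $p^{2\alpha}$ to obtain $p = p^{2\alpha}(u^2 + p^{2(\beta - \alpha)} v^2)$, where the factor in parentheses lies in $\mathbb{Z}_p^{\times}$ since $u^2$ is a unit and the other term lies in $p \mathbb{Z}_p$. Applying $\nu_p$ to both sides then forces $1 = 2\alpha$, which is impossible for $\alpha \in \mathbb{Z}$.

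In the case $\alpha = \beta$, the same factoring yields $u^2 + v^2 = p^{1 - 2\alpha}$. If $\alpha \geq 1$, the right-hand side has negative $p$-adic valuation while the left-hand side lies in $\mathbb{Z}_p$, a contradiction. If $\alpha \leq 0$, then $1 - 2\alpha \geq 1$, and reducing the identity modulo $p$ gives $\bar{u}^2 + \bar{v}^2 \equiv 0 \pmod{p}$ with $\bar{u}, \bar{v} \in \mathbb{F}_p^{\times}$. This forces $-1 \equiv (\bar{u}/\bar{v})^2$ to be a square in $\mathbb{F}_p$.

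The main obstacle, and the point where the hypothesis $p \equiv 3 \pmod{4}$ becomes essential, is precisely this last step: by the classical criterion for $-1$ being a quadratic residue, $-1$ is a nonsquare in $\mathbb{F}_p$ exactly when $p \equiv 3 \pmod{4}$, so the congruence above cannot hold. All cases are therefore contradictory, which completes the proof.
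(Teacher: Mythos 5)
Your proof is correct, but it takes a different route from the paper. The paper argues via the classification invariants: if $\mathrm{diag}(p)$ embedded in $\mathrm{diag}(1,1)$, comparison of discriminants would force its orthogonal complement to be $\mathrm{diag}(p)$ as well, giving $\mathrm{diag}(1,1)\cong\mathrm{diag}(p,p)$; this is then ruled out by the Hasse invariant, since $(p,p)_p=-1$ when $p\equiv 3 \bmod 4$ while $\epsilon(\mathrm{diag}(1,1))=1$. You instead give a direct $p$-adic valuation argument showing $x^2+y^2=p$ has no solution, modelled on the paper's own proof of Lemma \ref{p}; the case analysis on $\nu_p(x)$ versus $\nu_p(y)$ is sound, and the reduction to ``$-1$ is a square in $\mathbb{F}_p$'' in the $\alpha=\beta\le 0$ case correctly isolates where $p\equiv 3\bmod 4$ enters. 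Your version is more elementary and self-contained (it needs neither the Hilbert symbol table nor Witt cancellation), and it makes the arithmetic obstruction transparent; the paper's version is shorter given that the invariant machinery is already in place and generalizes more readily to higher-dimensional complements (indeed it is the template reused for Lemma \ref{alot}). One trivial point you should patch: writing $x=p^{\alpha}u$, $y=p^{\beta}v$ with $u,v\in\mathbb{Z}_p^{\times}$ tacitly assumes $x,y\neq 0$; the degenerate case where one coordinate vanishes is dispatched immediately because $x^2=p$ would force $2\nu_p(x)=1$.
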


\begin{proof}
Suppose that diag$(p)$ exists. By comparing the discriminant of diag$(1,1)$ and diag$(p)\oplus \text{diag}(p)$, we obtain $\text{diag}(1,1)=\text{diag}(p)\oplus \text{diag}(p)$, which contradicts the fact that the Hasse-invariant of diag$(1,1)$ is $1$, but diag$(p,p)$ is $-1$.
\end{proof}
Similarly, it is not hard to show that diag$(\lambda p)$ does not exist in $(\mathbb{Q}_{p}^{2},\text{diag}(1,1))$. Furthermore, by inserting $0$ $(n-2)$ times to each vector, the spaces diag$(1)$ and diag$(\lambda)$ exist when $n \geq 3$. If $n=3$, the other types, diag$(p)$ and diag$(\lambda p)$, also exist as follows:
\[
    \text{diag}(p)=\left \langle (a,b,c) \right \rangle, ~\text{diag}(\lambda p)=\left \langle (a,b,d) \right \rangle.
\]
Thus all $4$ types of $1$-dimensional nondegenerate quadratic spaces can be embedded in $(\mathbb{Q}_{p}^{n},\text{diag}(1^{n}))$ when $n \geq 4$. 

\smallskip
Let us think about $2$-dimensional nondegenerate quadratic spaces over $\mathbb{Q}_{p}$. Suppose that $n=3$. Since there are $4$ types of $1$-dimensional nondegenerate quadratic subspaces of $(\mathbb{Q}_{p}^{3},\text{diag}(1^{3}))$, there should be $4$ types of $2$-dimensional nondegenerate quadratic subspaces of $(\mathbb{Q}_{p}^{3},\text{diag}(1^{3}))$. We find the following $4$ types in Theorem \ref{cla1}:
\begin{itemize}
    \item diag$(1,1)=\left \langle (1,0,0),(0,1,0) \right \rangle$,
    \item diag$(1,\lambda)=\left \langle (1,0,0),(0,a,b) \right \rangle$,
    \item diag$(\lambda,p)=\left \langle (a,b,0),(-b,a,c) \right \rangle$,
    \item diag$(\lambda,\lambda p)=\left \langle (a,b,0),(-b,a,d) \right \rangle$.
\end{itemize}
If $n=4$, then $3$ of the other cases also exist:
\begin{itemize}
    \item diag$(p,p)=\left \langle (a,b,c,0),(-b,a,0,c) \right \rangle$,
    \item diag$(1,p)=\left \langle (1,0,0,0),(0,a,b,c) \right \rangle$,
    \item diag$(1,\lambda p)=\left \langle (1,0,0,0),(0,a,b,d) \right \rangle$.
\end{itemize}
If $n \geq 5$, all $7$ types of nondegenerate $2$-dimensional quadratic spaces can be embedded in $(\mathbb{Q}_{p}^{n},\text{diag}(1^{n}))$.

\smallskip
Let us consider $3$-dimensional nondegenerate quadratic spaces. Suppose that $n=4$. Since there are $4$ types of $1$-dimensional nondegenerate quadratic subspaces of $(\mathbb{Q}_{p}^{4},\text{diag}(1^{4}))$, there have to be $4$ types of $3$-dimensional nondegenerate quadratic subspaces of $(\mathbb{Q}_{p}^{4},\text{diag}(1^{4}))$. For diag$(1,1,1)$, we can construct diag$(1,1,1)=\left \langle (1,0,0,0),(0,1,0,0),(0,0,1,0) \right \rangle$. We verify the existence of other types by comparing the discriminants and the Hasse-invariants. The quadratic space diag$(p,p,p)$ exists in $(\mathbb{Q}_{p}^{4},\text{diag}(1^{4}))$ because its perpendicular space $(\text{diag}(p))^{\perp}$ is diag$(p,p,p)$. Furthermore, diag$(1,\lambda,p )$ and diag$(1,p,\lambda p)$ also exist by verifying $(\text{diag}(\lambda ))^{\perp}$ and $(\text{diag}(\lambda p))^{\perp}$. If $n=5$, since there are $7$ types of $2$-dimensional nondegenerate quadratic subspaces of $(\mathbb{Q}_{p}^{5},\text{diag}(1^{5}))$, there must be $7$ types of $3$-dimensional nongenerate quadratic subspace in $(\mathbb{Q}_{p}^{5},\text{diag}(1^{5}))$. Similarly, it is not hard to check that there are the following $7$ types containing diag$(p,p,p)$:
\begin{itemize}
    \item diag$(1,1,1)=\left \langle (1,0,0,0,0),(0,1,0,0,0),(0,0,1,0,0) \right \rangle$,
    \item diag$(1,p,p)=\left \langle (1,0,0,0,0),(0,a,b,c,0),(0,-b,a,0,c) \right \rangle$,
    \item diag$(1,1,p)=\left \langle (1,0,0,0,0),(0,1,0,0,0),(0,0,a,b,c) \right \rangle$,
    \item diag$(1,\lambda,p)=\left \langle (1,0,0,0,0),(0,a,b,d,0),(0,-b,a,0,c) \right \rangle$.
    \item diag$(1,1,\lambda p)=\left \langle (1,0,0,0,0),(0,1,0,0,0),(0,0,a,b,d) \right \rangle$.
    \item diag$(1,p ,\lambda p)=\left \langle (1,0,0,0,0),(0,a,b,c,0),(0,-b,a,0,d) \right \rangle$.
\end{itemize}
For diag$(p,p,p)$, this can be embedded in $(\mathbb{Q}_{p}^{n},\text{diag}(1^{n}))$ when $n=4$, it follows that it does so when $n=5$. If $n=6$, diag$(p,p,\lambda)=\left \langle (a,b,c,0,0,0),(-b,a,0,c,0,0),(0,0,0,0,a,b) \right \rangle$ also can exist in $(\mathbb{Q}_{p}^{6},\text{diag}(1^{6}))$. If $n \geq 6$, then all $8$ types in Theorem \ref{cla2} can be embedded in $(\mathbb{Q}_{p}^{n},\text{diag}(1^{n}))$.

\smallskip
We give the conditions for degenerate quadratic spaces. Since the proofs are similar to Theorem \ref{main} and Theorem \ref{main2}, we omit the proofs.
\begin{theorem}
Suppose that $p\equiv 3$ mod $4$. Let $A$ be a degenerate quadratic space and let us denote $A$ by $\text{diag}(0^{k}) \oplus S$, where $S$ is a nondegenerate quadratic space. 
\begin{itemize}
    \item If $n$ is odd and $(n-2k-1)/2$ is even or $n$ is even and $(n-2k)/2$ is even, then $A$ is embedded in diag$(1^{n})$ if and only if $S$ is embedded in $\text{diag}(1^{n-2k})$ (and $k \leq (n-2)/2$ when $n=4l+2$).
    \item If $n$ is odd and $(n-2k-1)/2$ is odd or $n$ is even and $(n-2k)/2$ is odd, then $A$ is embedded in diag$(1^{n})$ if and only if $S$ is embedded in $\text{diag}(1^{n-2k-1},\lambda)$ (and $k \leq (n-2)/2$ when $n=4l+2$).
\end{itemize}
\end{theorem}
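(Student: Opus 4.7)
The plan is to follow the proofs of Theorem \ref{main} and Theorem \ref{main2} step by step, the one new ingredient being the computation of the orthogonal complement of a maximal hyperbolic summand of $\text{diag}(1^n)$ when $p\equiv 3\pmod 4$. The crucial difference from the $p\equiv 1\pmod 4$ case is that $\mathbb{H}\not\cong\text{diag}(1,1)$: since $-1$ is a non-square in $\mathbb{F}_p$ we have $\mathbb{H}\cong\text{diag}(1,\lambda)$, so $\text{diag}(1,1)$ is anisotropic, and the Witt decomposition of $\text{diag}(1^n)$ depends on $n\bmod 4$.

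First I would use Lemma \ref{co1} (which produces elements with $a^2+b^2=\lambda$ and $a^2+b^2+e^2=0$ in $\mathbb{Q}_p$) together with the Hilbert-symbol table for $p\equiv 3\pmod 4$ to compute the Witt decomposition of $\text{diag}(1^n)$ in each residue class of $n$ modulo $4$. This yields anisotropic kernels $0$, $\text{diag}(1)$, $\text{diag}(1,1)$, $\text{diag}(\lambda)$ for $n\equiv 0,1,2,3\pmod 4$, and Witt indices $n/2$, $(n-1)/2$, $(n-2)/2$, $(n-1)/2$ respectively. The shortfall at $n=4l+2$ is precisely what forces the restriction $k\leq(n-2)/2$ stated in the theorem.

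Writing $A=W\oplus S$ with $W\cong\text{diag}(0^k)$, the forward direction proceeds exactly as in Theorem \ref{main}. Proposition \ref{hyper} extends $W$ to a hyperbolic sum $W\oplus W'\cong\mathbb{H}^k$ inside $\text{diag}(1^n)$ (possible because $k$ is at most the Witt index), Witt's extension theorem moves this to the standard position $\langle x_1,\ldots,x_k,y_1,\ldots,y_k\rangle$, and the orthogonal-projection-after-inclusion diagram from Theorem \ref{main} yields an isometric embedding of $S$ into the orthogonal complement $C_{n,k}$ defined by $\text{diag}(1^n)=\mathbb{H}^k\oplus C_{n,k}$. Conversely, if $S\hookrightarrow C_{n,k}$, the union with $\langle x_1,\ldots,x_k\rangle\subset\mathbb{H}^k$ is an isometric embedding of $A$.

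The main technical step is identifying $C_{n,k}$. By Witt cancellation $C_{n,k}$ is determined by its dimension $n-2k$, discriminant $(-1)^k\equiv\lambda^k$ modulo squares, and Hasse invariant, which a short Hilbert-symbol computation (using $(\lambda,\lambda)_p=1$ from the table) shows equals $1$. Comparing with Lemma \ref{cla1}, Lemma \ref{cla2}, and their higher-dimensional analogues then matches $C_{n,k}$ to $\text{diag}(1^{n-2k})$ or $\text{diag}(1^{n-2k-1},\lambda)$, landing in the appropriate case of the theorem. The hard part is the bookkeeping: the parity conditions on $(n-2k)/2$ and $(n-2k-1)/2$ encode the interaction between the parity of $k$ and the anisotropic kernel of $\text{diag}(1^n)$, so one must carefully track which case one is in for each residue of $n\bmod 4$ before appealing to the classification.
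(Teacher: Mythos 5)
Your overall strategy is exactly the one the paper intends (it omits this proof, citing similarity to Theorems \ref{main} and \ref{main2}), and the individual computations you describe are correct: for $p\equiv 3 \pmod 4$ one has $\mathbb{H}\cong\text{diag}(1,\lambda)$, the anisotropic kernels and Witt indices of $\text{diag}(1^{n})$ are as you list them, the constraint $k\leq (n-2)/2$ at $n=4l+2$ does come from the Witt index, and the orthogonal complement $C_{n,k}$ of $\mathbb{H}^{k}$ in $\text{diag}(1^{n})$ has dimension $n-2k$, discriminant $(-1)^{k}\equiv\lambda^{k}$, and Hasse invariant $1$.

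The gap is the step you explicitly defer --- ``landing in the appropriate case of the theorem'' --- and it is not mere bookkeeping. Your own invariant computation forces $C_{n,k}\cong\text{diag}(1^{n-2k})$ exactly when $k$ is even and $C_{n,k}\cong\text{diag}(1^{n-2k-1},\lambda)$ exactly when $k$ is odd, with no dependence on $n \bmod 4$; the theorem's stated conditions reduce to ``$k$ even'' only when $n\equiv 0,1\pmod 4$ and are inverted when $n\equiv 2,3\pmod 4$. Concretely, take $n=3$, $k=1$: then $(n-2k-1)/2=0$ is even, so the statement places this in the first case and asserts that $\text{diag}(1)\oplus\text{diag}(0)$ embeds in $\text{diag}(1^{3})$. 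But $\text{diag}(1,1,1)\cong\mathbb{H}\oplus\text{diag}(\lambda)$, so every vector orthogonal to a nonzero isotropic vector has $Q$-value in the square class of $\lambda$ or equal to $0$; hence $\text{diag}(1)\oplus\text{diag}(0)$ does \emph{not} embed, while $\text{diag}(\lambda)\oplus\text{diag}(0)$ does. So the argument you outline, carried to completion, proves the parity-of-$k$ dichotomy rather than the statement as written. You need to actually perform the case-matching you postpone; doing so either surfaces this discrepancy (and the stated conditions must be corrected to depend on the parity of $k$ alone), or, if you insist on the conditions as written, the proof cannot close because the claimed identification of $C_{n,k}$ is false for $n\equiv 2,3\pmod 4$.
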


Suppose that a quadratic subspace $W$ of $(\mathbb{Q}_{p}^{n},\text{diag}(1^{n}))$ is isometrically isomorphic to diag$(0^{k})$. Then dim$W\leq n/2$. If $n$ is even, $n=4l+2$ and is embedded in $(\mathbb{Q}_{p}^{n},\text{diag}(1^{n}))$, dim$W$ cannot be $n/2$. If $n=4l$ and dim$W=n/2$, then $W$ can be constructed by
\[\left \langle (a,b,e,0,\cdots,0),(-b,a,0,e,0,\cdots,0),\cdots,(0,\cdots,0,a,b,e,0),(0,\cdots,0,-b,a,0,e) \right \rangle.\]

\begin{corollary}
Suppose that $p \equiv 3$ mod $4$. If $n$ is odd and $(n-2k-1)/2$ is even, or if $n$ is even and $(n-2k)/2$ is even, then we have
\begin{itemize}
    \item diag$(1)\oplus$diag$(0^{k})$ can be embedded in diag$(1^{n})$ if and only if $k \leq \frac{n-1}{2}$,
    \item diag$(\lambda)\oplus$diag$(0^{k})$ can be embedded in diag$(1^{n})$ if and only if $k \leq \frac{n-2}{2}$,
    \item diag$(p)\oplus$diag$(0^{k})$ can be embedded in diag$(1^{n})$ if and only if $k \leq \frac{n-3}{2}$,
    \item diag$(\lambda p)\oplus$diag$(0^{k})$ can be embedded in diag$(1^{n})$ if and only if $k \leq \frac{n-3}{2}$,
    \item diag$(1,1)\oplus$diag$(0^{k})$ can be embedded in diag$(1^{n})$ if and only if $k \leq \frac{n-2}{2}$,
    \item diag$(1,\lambda )\oplus$diag$(0^{k})$ can be embedded in diag$(1^{n})$ if and only if $k \leq \frac{n-3}{2}$,
    \item diag$(\lambda,p)\oplus$diag$(0^{k})$ can be embedded in diag$(1^{n})$ if and only if $k \leq \frac{n-3}{2}$,
    \item diag$(\lambda ,\lambda p)\oplus$diag$(0^{k})$ can be embedded in diag$(1^{n})$ if and only if $k \leq \frac{n-3}{2}$,
    \item diag$(p,p)\oplus$diag$(0^{k})$ can be embedded in diag$(1^{n})$ if and only if $k \leq \frac{n-4}{2}$,
    \item diag$(1, p)\oplus$diag$(0^{k})$ can be embedded in diag$(1^{n})$ if and only if $k \leq \frac{n-4}{2}$,
    \item diag$(1, \lambda  p)\oplus$diag$(0^{k})$ can be embedded in diag$(1^{n})$ if and only if $k \leq \frac{n-4}{2}$,
    \item diag$(1,1,1)\oplus$diag$(0^{k})$ can be embedded in diag$(1^{n})$ if and only if $k \leq \frac{n-3}{2}$, 
    \item diag$(p,p,p)\oplus$diag$(0^{k})$ can be embedded in diag$(1^{n})$ if and only if $k \leq \frac{n-4}{2}$,
    \item diag$(1,\lambda, p )\oplus$diag$(0^{k})$ can be embedded in diag$(1^{n})$ if and only if $k \leq \frac{n-4}{2}$,
    \item diag$(1,p,\lambda p)\oplus$diag$(0^{k})$ can be embedded in diag$(1^{n})$ if and only if $k \leq \frac{n-4}{2}$,
    \item diag$(1,p, p)\oplus$diag$(0^{k})$ can be embedded in diag$(1^{n})$ if and only if $k \leq \frac{n-5}{2}$,
    \item diag$(1,1 ,p)\oplus$diag$(0^{k})$ can be embedded in diag$(1^{n})$ if and only if $k \leq \frac{n-5}{2}$,
    \item diag$(1,1 ,\lambda p)\oplus$diag$(0^{k})$ can be embedded in diag$(1^{n})$ if and only if $k \leq \frac{n-5}{2}$,
    \item diag$(p,p,\lambda )\oplus$diag$(0^{k})$ can be embedded in diag$(1^{n})$ if and only if $k \leq \frac{n-6}{2}$.
\end{itemize}

\smallskip
If $n$ is odd and $(n-2k-1)/2$ is odd, or if $n$ is even and $(n-k)/2$ is odd, then we have
\begin{itemize}
    \item diag$(1)\oplus$diag$(0^{k})$ can be embedded in diag$(1^{n})$ if and only if $k \leq \frac{n-2}{2}$,
    \item diag$(\lambda)\oplus$diag$(0^{k})$ can be embedded in diag$(1^{n})$ if and only if $k \leq \frac{n-1}{2}$,
    \item diag$(p)\oplus$diag$(0^{k})$ can be embedded in diag$(1^{n})$ if and only if $k \leq \frac{n-2}{2}$,
    \item diag$(\lambda p)\oplus$diag$(0^{k})$ can be embedded in diag$(1^{n})$ if and only if $k \leq \frac{n-2}{2}$,
    \item diag$(1,1)\oplus$diag$(0^{k})$ can be embedded in diag$(1^{n})$ if and only if $k \leq \frac{n-3}{2}$,
    \item diag$(1,\lambda )\oplus$diag$(0^{k})$ can be embedded in diag$(1^{n})$ if and only if $k \leq \frac{n-2}{2}$,
    \item diag$(1,p)\oplus$diag$(0^{k})$ can be embedded in diag$(1^{n})$ if and only if $k \leq \frac{n-3}{2}$,
    \item diag$(1 ,\lambda p)\oplus$diag$(0^{k})$ can be embedded in diag$(1^{n})$ if and only if $k \leq \frac{n-3}{2}$,
    \item diag$(\lambda,p)\oplus$diag$(0^{k})$ can be embedded in diag$(1^{n})$ if and only if $k \leq \frac{n-4}{2}$,
    \item diag$(\lambda, \lambda p)\oplus$diag$(0^{k})$ can be embedded in diag$(1^{n})$ if and only if $k \leq \frac{n-4}{2}$,
    \item diag$(p,   p)\oplus$diag$(0^{k})$ can be embedded in diag$(1^{n})$ if and only if $k \leq \frac{n-5}{2}$,
    \item diag$(1,1,1)\oplus$diag$(0^{k})$ can be embedded in diag$(1^{n})$ if and only if $k \leq \frac{n-4}{2}$, 
    \item diag$(1,p,\lambda p)\oplus$diag$(0^{k})$ can be embedded in diag$(1^{n})$ if and only if $k \leq \frac{n-3}{2}$,
    \item diag$(1,1,\lambda p )\oplus$diag$(0^{k})$ can be embedded in diag$(1^{n})$ if and only if $k \leq \frac{n-4}{2}$,
    \item diag$(1,1,p)\oplus$diag$(0^{k})$ can be embedded in diag$(1^{n})$ if and only if $k \leq \frac{n-4}{2}$,
    \item diag$(p,p, p)\oplus$diag$(0^{k})$ can be embedded in diag$(1^{n})$ if and only if $k \leq \frac{n-5}{2}$,
    \item diag$(1,\lambda ,p)\oplus$diag$(0^{k})$ can be embedded in diag$(1^{n})$ if and only if $k \leq \frac{n-5}{2}$,
    \item diag$(p,p ,\lambda )\oplus$diag$(0^{k})$ can be embedded in diag$(1^{n})$ if and only if $k \leq \frac{n-5}{2}$,
    \item diag$(1,p,p )\oplus$diag$(0^{k})$ can be embedded in diag$(1^{n})$ if and only if $k \leq \frac{n-6}{2}$.
\end{itemize}
\end{corollary}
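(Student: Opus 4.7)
The plan is to deduce each bound mechanically from the preceding theorem, which reduces the embedding question for $A=\text{diag}(0^{k})\oplus S$ into $\text{diag}(1^{n})$ to an embedding question about the nondegenerate piece $S$. Under the first parity hypothesis, $A$ embeds in $\text{diag}(1^{n})$ if and only if $S$ embeds in $\text{diag}(1^{n-2k})$; under the second, $A$ embeds if and only if $S$ embeds in $\text{diag}(1^{n-2k-1},\lambda)$. Thus for each of the nineteen isometry classes of nondegenerate $S$ of dimension at most $3$ (as enumerated in Lemmas \ref{cla1} and \ref{cla2} for $p\equiv3\pmod 4$), it suffices to determine the minimum $m=m_{0}(S)$ such that $S\hookrightarrow\text{diag}(1^{m})$ and the minimum $m=m_{1}(S)$ such that $S\hookrightarrow\text{diag}(1^{m-1},\lambda)$, and then read off $k\leq (n-m_{0}(S))/2$ or $k\leq (n-m_{1}(S))/2$ depending on which case of the theorem applies.

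The values of $m_{0}(S)$ have essentially been computed in the body of Subsection 3.3: explicit embeddings are exhibited using the elements $a,b,c,d,e$ of Lemma \ref{co1} (e.g.\ $\text{diag}(\lambda)=\langle(a,b)\rangle$, $\text{diag}(p)=\langle(a,b,c)\rangle$, $\text{diag}(p,p)=\langle(a,b,c,0),(-b,a,0,c)\rangle$, and similarly for the three-dimensional forms), while the non-existence in lower dimensions is obtained by the discriminant/Hasse-invariant arguments of the paper (cf.\ the proof that $\text{diag}(p)\not\hookrightarrow\text{diag}(1,1)$, together with the Witt-cancellation trick of Lemma \ref{alot}). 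The values of $m_{1}(S)$ are then obtained by the same procedure applied to the Lorentzian target $\text{diag}(1^{m-1},\lambda)$: one exhibits an explicit realization of $S$ inside $\text{diag}(1^{m-1},\lambda)$ when $m$ is large enough, and rules out smaller $m$ by comparing the three invariants of $S$ against those of every candidate complementary subspace, invoking Witt's cancellation theorem (Theorem \ref{wc}) whenever a direct computation of the complement is ambiguous.

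Once $m_{0}(S)$ and $m_{1}(S)$ have been tabulated for each of the $4+7+8=19$ forms, every entry of the corollary is immediate: solving $n-2k\geq m_{0}(S)$ and $n-2k-1\geq m_{1}(S)-1$ for $k$ yields the stated inequalities $k\leq (n-r)/2$. In the even-$n$ case with $n\equiv2\pmod 4$, the additional restriction $k\leq(n-2)/2$ already built into the theorem must be enforced, but it is automatically subsumed by the bounds $m_{0}(S)\geq 2$ or $m_{1}(S)\geq 2$ that hold whenever $\dim S\geq 1$, so it does not appear separately in the corollary.

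The main obstacle is purely combinatorial bookkeeping: there are nineteen forms and two parity regimes, and for each one needs to pair the correct minimum embedding dimension with the correct target. The risk is a sign error in the Hasse-invariant computation from Table 2 (particularly for the three-dimensional anisotropic forms such as $\text{diag}(p,p,\lambda)$ and $\text{diag}(1,p,\lambda p)$), which could shift a bound by one; mitigating this requires computing each $m_{1}(S)$ carefully in parallel with the corresponding $m_{0}(S)$ rather than relying on a symmetry between the two targets, since that symmetry is broken by the fact that $\text{diag}(1,\lambda)=\mathbb{H}$ while $\text{diag}(1,1)\neq\mathbb{H}$ when $p\equiv 3\pmod 4$.
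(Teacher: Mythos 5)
Your proposal matches the paper's (implicit) derivation exactly: the corollary is obtained by combining the preceding theorem's reduction to the nondegenerate part $S$ with the minimum embedding dimensions $m_{0}(S)$ and $m_{1}(S)$ tabulated via the explicit bases and discriminant/Hasse-invariant computations in Subsections 3.3 and 3.4, and your spot values of $m_{0}$ and $m_{1}$ reproduce every stated bound. One tiny imprecision: the side constraint $k\leq(n-2)/2$ for $n=4l+2$ is not subsumed because $m_{0}(S),m_{1}(S)\geq 2$ (indeed $m_{0}(\text{diag}(1))=m_{1}(\text{diag}(\lambda))=1$), but rather because $m\geq 1$ forces $k<n/2$, which for even $n$ and integer $k$ already gives $k\leq(n-2)/2$.
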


Since any nondegenerate quadratic space of dimension $k$ is isometrically isomorphic to diag$(1^{k-3})\oplus Q$, where $Q$ is a $3$-dimensional nondegenerate quadratic space, given any quadratic space $V$, we can determine if $V$ can be embedded in diag$(1^{n})$ when $p \equiv 3$ mod $4$.

\subsection{Embeddings in $(\mathbb{Q}_{p}^{n},\text{diag}(1^{n-1},\lambda ))$ when $p\equiv 3$ mod $4$}
Let us consider $1$-dimensional nondegenerate quadratic spaces. Suppose that $n=2$. Then there are $4$ types of quadratic nondegenerate subspaces of $(\mathbb{Q}_{p}^{2},\lambda \text{dot}_{2})$. First, we have the following two nondegenerate subspaces, diag$(1)=\left \langle (1,0) \right \rangle$, and  diag$(\lambda)=(\text{diag}(1))^{\perp}$. Unlike the case in which the embedded space is Euclidean and $p\equiv 3$ mod $4$, diag$(p)$ exists in $(\mathbb{Q}_{p}^{2},\text{diag}(1,\lambda))$. 

\begin{lemma}
If $p \equiv 3$ mod $4$, then diag$(p)$ exist in $(\mathbb{Q}_{p}^{2},\text{diag}(1,\lambda))$.
\end{lemma}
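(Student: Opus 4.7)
The plan is to produce an explicit vector $v \in \mathbb{Q}_p^{2}$ (equipped with the form $\mathrm{diag}(1,\lambda)$) whose value under the form lies in the square class of $p$, so that $\langle v \rangle \cong \mathrm{diag}(p)$. By Lemma \ref{co1}, when $p \equiv 3 \pmod 4$ we have elements $a, b, c \in \mathbb{Q}_p$ satisfying $a^{2}+b^{2} = \lambda$ and $a^{2}+b^{2}+c^{2} = p$. Subtracting the first relation from the second yields the single identity $c^{2} + \lambda = p$, which is precisely the statement that the vector $v = (c,1) \in \mathbb{Q}_p^{2}$ satisfies $Q(v) = c^{2} + \lambda \cdot 1^{2} = p$.

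Given this, the verification is immediate: first check that $v$ is not isotropic (since $Q(v) = p \neq 0$) so that $\langle v \rangle$ is genuinely a nondegenerate $1$-dimensional quadratic subspace of $(\mathbb{Q}_p^{2},\mathrm{diag}(1,\lambda))$, and then note that the $1$-dimensional form on $\langle v \rangle$ is represented by the scalar $Q(v) = p$, so $\langle v \rangle \cong \mathrm{diag}(p)$ as quadratic spaces. This is enough by Lemma \ref{1dim}, since $p$ is a distinguished element of the square-class group $\mathbb{Q}_p^{\times}/(\mathbb{Q}_p^{\times})^{2}$.

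There is essentially no obstacle: the entire argument reduces to invoking the existence statement in Lemma \ref{co1} and recognizing $c^{2} + \lambda = p$ as the required equation for an element of $\mathbb{Q}_p^{2}$ under the form $x^{2} + \lambda y^{2}$. The only mildly subtle point, which should be mentioned in passing to contrast with Lemma \ref{p}, is that the corresponding statement fails for $p \equiv 1 \pmod 4$ precisely because in that case no solution to $a^{2}+b^{2}+c^{2}=p$ with $a^{2}+b^{2}=\lambda$ is guaranteed by the analogous Hensel lifting argument (the key congruence $a^{2}+b^{2} \equiv -1 \pmod p$ being solvable by genuinely nonsquare $\lambda = -1$ only when $p \equiv 3 \pmod 4$).
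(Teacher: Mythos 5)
Your proof is correct, but it reaches the representation of $p$ by a different mechanism than the paper. You derive the key identity $c^{2}+\lambda=p$ by subtracting the two relations $a^{2}+b^{2}=\lambda$ and $a^{2}+b^{2}+c^{2}=p$ from Lemma \ref{co1} (so the vector $(c,1)$ does the job); this is legitimate since the lemma asserts both equations hold for the \emph{same} $a,b$, and the only hidden cost is that Lemma \ref{co1} itself is proved via Hensel's lemma. The paper instead argues directly and more elementarily: since $-1$ is a nonsquare when $p\equiv 3 \bmod 4$, one may take $\lambda=-1$, and then $z^{2}-w^{2}=p$ factors as $(z-w)(z+w)=p$, which is solved explicitly over $\mathbb{Q}$ (e.g.\ $z=(p+1)/2$, $w=(p-1)/2$) with no $p$-adic lifting at all. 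So the paper's route buys a completely explicit rational witness independent of Hensel, while yours buys economy by reusing a lemma already established; both are sound. Your closing remark contrasting with $p\equiv 1 \bmod 4$ is fine as motivation but, as you seem aware, only explains why this particular argument breaks down there, not why the embedding actually fails --- that requires the separate valuation argument of Lemma \ref{p}.
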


\begin{proof}
We only need to show that there are $z,x$ in $\mathbb{Q}_{p}$ such that $z^{2}+\lambda w^{2}=p$. We may assume $\lambda$ is $-1$ when $p \equiv 3$ mod $4$. Then we have $(z-w)(z+w)=p$, and by changing of coordinates from $s=z-w$ and $t=z+w$, this completes the existence of diag$(p)$.
\end{proof}
Furthemore, since diag$(\lambda p)=(\text{diag}(p))^{\perp}$, thus diag$(\lambda p)$ also exists in $(\mathbb{Q}_{p}^{2},\text{diag}(1,\lambda))$. Therefore, all $4$ types exist in $(\mathbb{Q}_{p}^{n},\text{diag}(1^{n-1},\lambda))$ when $n \geq 3$. 

\smallskip
Let us think about $2$-dimensional quadratic spaces over $\mathbb{Q}_{p}$. Suppose that $n=3$. Since there are $4$ types of $1$-dimensional nondegenerate quadratic subspaces of $(\mathbb{Q}_{p}^{3},\text{diag}(1,1,\lambda))$, there should be $4$ types of $2$-dimensional quadratic subspaces of $(\mathbb{Q}_{p}^{3},\text{diag}(1,1,\lambda))$. We find the following $4$ types in Theorem \ref{cla1}:
\[\text{diag}(1,1)=(\text{diag}(\lambda))^{\perp},\text{diag}(1,\lambda)=(\text{diag}(1))^{\perp}, \text{diag}(1,p)=(\text{diag}(\lambda p))^{\perp},\text{diag}(1,\lambda p)=(\text{diag}(p))^{\perp}.\]
If $n=4$, then the following $2$ cases also exist: diag$(\lambda ,p)=(\text{diag}(1,p))^{\perp}$ and diag$(\lambda ,\lambda p)=(\text{diag}(1,\lambda p))^{\perp}$. However, diag$(p,p)$ does not exist. By comparing the discriminants, the only possible orthogonal complement of diag$(p,p)$ is diag$(1,\lambda)$, but the Hasse-invariant of diag$(p,p)\oplus \text{diag}(1,\lambda)$ is $-1$. This contradicts the fact that the Hasse-invariant of diag$(1^{3},\lambda)$ is $1$. Suppose that $n=5$. Then we have diag$(p,p)=\left \langle (a,b,c,0,0),(-b,a,0,c,0) \right \rangle$ and thus all $7$ types exist in $(\mathbb{Q}_{p}^{5},\text{diag}(1^{4},\lambda))$. Similarly, if $n \geq 5$, all $7$ types of nondegenerate $2$-dimensional quadratic spaces can be embedded in $(\mathbb{Q}_{p}^{n},\text{diag}(1^{n-1},\lambda))$.

\smallskip
Let us consider $3$-dimensional nondegenerate quadratic spaces. Suppose that $n=4$. Since there are $4$ types of $1$-dimensional nondegenerate quadratic subspaces of $(\mathbb{Q}_{p}^{4},\text{diag}(1^{3},\lambda))$, there have to be $4$ types of $3$-dimensional quadratic subspaces of $(\mathbb{Q}_{p}^{4},\text{diag}(1^{3},\lambda))$. For diag$(1,1,1)$, we can construct diag$(1,1,1)=\left \langle (1,0,0,0),(0,1,0,0),(0,0,1,0) \right \rangle$. But for other cases, it is hard to find vectors satisfying the conditions. By comparing the discriminants and the Hasse-invariants, we find 
\[\text{diag}(1,p,\lambda p)=(\text{diag}(1))^{\perp},\text{diag}(1,1,\lambda p)=(\text{diag}(p))^{\perp},\text{ and } \text{diag}(1,1 ,p)=(\text{diag}(\lambda p))^{\perp}\]
exist when $n=4$. If $n=5$, since there are $7$ types of $2$-dimensional nondegenerate quadratic spaces $(\mathbb{Q}_{p}^{5},\text{diag}(1^{4},\lambda))$, there must be $7$ types of $3$-dimensional quadratic subspace in $(\mathbb{Q}_{p}^{5},\text{diag}(1^{4},\lambda))$. It is not hard to check that the following $7$ types exist except for diag$(1,p,p)$:
\begin{itemize}
    \item diag$(1,1,1)=(\text{diag}(1,\lambda ))^{\perp}$,
    \item diag$(1,1,p)=(\text{diag}(\lambda,p))^{\perp}$,
    \item diag$(p,p,p)=(\text{diag}(1,\lambda p))^{\perp}$,
    \item diag$(1,\lambda,p)=(\text{diag}(1,p))^{\perp}$,
    \item diag$(1,1,\lambda p)=(\text{diag}(\lambda ,\lambda p))^{\perp}$,
    \item diag$(p,p ,\lambda )=(\text{diag}(1,1))^{\perp}$,
    \item diag$(1,p ,\lambda p)=(\text{diag}(p,p))^{\perp}$.
\end{itemize}
If $n=6$, we also have diag$(1,p,p)=(\text{diag}(1,p,\lambda p))^{\perp}$. Therefore, all $8$ cases can be embedded in $(\mathbb{Q}_{p}^{n},\text{diag}(1^{n-1},\lambda))$ when $n \geq 6$.

\smallskip
We now give the conditions for degenerate quadratic spaces. Since the proofs are similar to Theorem \ref{main} and Theorem \ref{main2}, we omit the proofs.

\begin{theorem}
Suppose that $p\equiv 3$ mod $4$. Let $A$ be a degenerate quadratic space and let us denote $A$ by $\text{diag}(0^{k}) \oplus S$, where $S$ is a nondegenerate quadratic space. 
\begin{itemize}
    \item If $n$ is odd and $(n-2k-1)/2$ is even, or $n$ is even and $(n-2k)/2$ is even, then $A$ is embedded in diag$(1^{n-1},\lambda)$ if and only if $S$ is embedded in $\text{diag}(1^{n-2k-1},\lambda )$ (and $k\leq (n-2)/2$ when $n=4l$).
    \item If $n$ is odd and $(n-2k-1)/2$ is odd, or $n$ is even and $(n-2k)/2$ is odd, then $A$ is embedded in diag$(1^{n-1},\lambda)$ if and only if $S$ is embedded in $\text{diag}(1^{n-2k})$ (and $k\leq (n-2)/2$ when $n=4l$).
\end{itemize}
\end{theorem}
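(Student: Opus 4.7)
The plan is to imitate the proofs of Theorem~\ref{main} and Theorem~\ref{main2}, reducing the embedding of $A = \text{diag}(0^{k}) \oplus S$ into $D := \text{diag}(1^{n-1}, \lambda)$ to an embedding of $S$ into the Witt complement of $k$ hyperbolic planes inside $D$. The essential new feature compared to the $p \equiv 1$ case is that $\mathbb{H} \cong \text{diag}(1, \lambda)$ now (since $-1$ is a nonsquare), while $\text{diag}(1,1)$ is anisotropic. Consequently the Witt decomposition of $D$ depends on $n \bmod 4$ rather than only on $n \bmod 2$, and this is what generates the two parity branches in the statement.

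First I would pin down the Witt decomposition of $D$ using Lemma~\ref{co1}. The building blocks are $\text{diag}(1,\lambda) \cong \mathbb{H}$, $\text{diag}(1,1,1) \cong \mathbb{H} \oplus \text{diag}(\lambda)$ (from $a^{2}+b^{2}+1 = 0$), $\text{diag}(1^{4}) \cong 2\mathbb{H}$, and $\text{diag}(\lambda,\lambda) \cong \text{diag}(1,1)$. Assembling these gives four cases: for $n \equiv 2 \bmod 4$, $D \cong (n/2)\mathbb{H}$ (trivial kernel); for $n \equiv 1 \bmod 4$, $D \cong ((n-1)/2)\mathbb{H} \oplus \text{diag}(\lambda)$; for $n \equiv 3 \bmod 4$, $D \cong ((n-1)/2)\mathbb{H} \oplus \text{diag}(1)$; and for $n = 4l$, $D \cong ((n-2)/2)\mathbb{H} \oplus \text{diag}(1,1)$. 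The last case shows the Witt index drops to $(n-2)/2$, which is precisely the source of the side condition $k \leq (n-2)/2$ when $n = 4l$.

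Next I would run the projection argument exactly as in Theorem~\ref{main2}. For the forward direction, suppose $A$ embeds in $D$. By Proposition~\ref{hyper} the $k$-dimensional totally isotropic part of $A$ extends to $k$ orthogonal hyperbolic planes $\bigoplus_{i=1}^{k} \langle x_{i}, y_{i} \rangle$, and Theorem~\ref{we} lets me assume that the isotropic part of $A$ is exactly $\langle x_{1}, \ldots, x_{k} \rangle$. Then $S \subset \langle x_{1}, \ldots, x_{k} \rangle^{\perp}$ with trivial intersection with $\langle x_{1}, \ldots, x_{k} \rangle$, so projecting along $\langle x_{1}, \ldots, x_{k} \rangle$ onto the orthogonal complement of the entire $k$-hyperbolic piece yields an isometric embedding of $S$ into $(k\mathbb{H})^{\perp}$. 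For the converse, the decomposition $D = k\mathbb{H} \oplus (k\mathbb{H})^{\perp}$ combined with an embedding $\iota: S \hookrightarrow (k\mathbb{H})^{\perp}$ and the standard isotropic subspace $\langle x_{1}, \ldots, x_{k} \rangle \subset k\mathbb{H}$ produces an embedding of $A$ into $D$.

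The remaining task is to identify $(k\mathbb{H})^{\perp} \cong (\text{Witt}(D) - k)\mathbb{H} \oplus (\text{anisotropic kernel})$ with one of the two targets $\text{diag}(1^{n-2k-1}, \lambda)$ or $\text{diag}(1^{n-2k})$. The recipe is to peel off one additional $\mathbb{H} \cong \text{diag}(1, \lambda)$ and combine it with the kernel whenever the remaining hyperbolic count has the ``wrong'' parity, using $\text{diag}(\lambda,\lambda) \cong \text{diag}(1,1)$ and $\text{diag}(1,1,1) \cong \mathbb{H} \oplus \text{diag}(\lambda)$ to collapse the result to the claimed diagonal form; the two resulting cases then correspond exactly to the parities of $(n-2k-1)/2$ (when $n$ is odd) or $(n-2k)/2$ (when $n$ is even). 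The main obstacle is keeping this $n \bmod 4$ versus $k \bmod 2$ bookkeeping straight; once the Witt decomposition above is in place and the projection argument is established, the remaining work is purely this arithmetic case analysis, and the proof closes by the invariant classification of $p$-adic quadratic forms.
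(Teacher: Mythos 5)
Your overall plan is the right one and is in fact exactly what the paper intends: the paper omits this proof, saying only that it is ``similar to Theorem~\ref{main} and Theorem~\ref{main2},'' and your outline supplies the genuinely new ingredient, namely that for $p\equiv 3\bmod 4$ one has $\mathbb{H}\cong\text{diag}(1,\lambda)$ while $\text{diag}(1,1)$ is anisotropic, so the Witt decomposition of $\text{diag}(1^{n-1},\lambda)$ depends on $n\bmod 4$. Your four decompositions are correct, as is your identification of $n=4l$ as the case where the Witt index drops to $(n-2)/2$, and the projection argument from Theorem~\ref{main2} does transfer verbatim.

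The gap is in the last step, which you explicitly defer as ``purely arithmetic case analysis'': the claim that the two outcomes for $(k\mathbb{H})^{\perp}$ correspond to the parities of $(n-2k-1)/2$ (resp.\ $(n-2k)/2$) exactly as in the statement. If you carry out your own recipe you will find this holds only for $n\equiv 0,1\bmod 4$; for $n\equiv 2,3\bmod 4$ the two branches come out swapped. For instance, take $n=3$, $k=1$, so $(n-2k-1)/2=0$ is even and the first bullet predicts the target $\text{diag}(\lambda)$. But $\text{diag}(1,1,\lambda)\cong\mathbb{H}\oplus\text{diag}(1)$ (write $\lambda=-1$ and split off $y^{2}-z^{2}$), so $(1\cdot\mathbb{H})^{\perp}\cong\text{diag}(1)$; concretely, for the isotropic vector $v=(1,0,1)$ one has $v^{\perp}=\{(t,w,t)\}$ and $Q((t,w,t))=w^{2}$ is always a square, so $\text{diag}(0)\oplus\text{diag}(1)$ embeds while $\text{diag}(0)\oplus\text{diag}(\lambda)$ does not --- the opposite of what the first bullet asserts. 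The invariant that actually governs the dichotomy is the parity of $k$: from $\text{diag}(1^{n-1},\lambda)\cong k\mathbb{H}\oplus(k\mathbb{H})^{\perp}$ and $d(\mathbb{H})=\lambda$ one gets $d\bigl((k\mathbb{H})^{\perp}\bigr)=\lambda^{1-k}$, so the correct target is $\text{diag}(1^{n-2k-1},\lambda)$ for $k$ even and $\text{diag}(1^{n-2k})$ for $k$ odd, independently of $n$; this coincides with the parity conditions in the statement precisely when $n\equiv 0,1\bmod 4$. So the proof cannot ``close by the invariant classification'' as you assert: completing your own computation yields a corrected version of the theorem (branch on the parity of $k$), not the statement as written, and any write-up must either fix the statement or locate an error in the Witt decomposition (there is none).
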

Suppose that a quadratic subspace $W$ of $(\mathbb{Q}_{p}^{n},\text{diag}(1^{n-1},\lambda))$ is isometrically isomorphic to diag$(0^{k})$. Then dim$W\leq n/2$. If $n=4l$, the dim$W$ cannot be $n/2$. If $n=4l+2$, dim$W=n/2$ is possible. Note that $\mathbb{H}=\text{diag}(1,\lambda)$. Thus we have diag$(1^{n-1},\lambda)=\text{diag}(1,\lambda)\oplus \text{diag}(1^{4l})=\bigoplus_{i=1}^{2l+1}\mathbb{H}$. Let $\mathbb{H}=\left \langle x_{i},y_{i} \right \rangle$. Then $W$ is isometrically isomorphic to $\left \langle x_{1},\cdots,x_{2l+1} \right \rangle$, which has the dimension $n/2$.

\begin{corollary}
Suppose that $p \equiv 3$ mod $4$. If $n$ is odd and $(n-2k-1)/2$ is even, or if $n$ is even and $(n-2k)/2$ is even, then we have
\begin{itemize}
    \item diag$(1)\oplus$diag$(0^{k})$ can be embedded in diag$(1^{n-1},\lambda)$ if and only if $k \leq \frac{n-2}{2}$,
    \item diag$(\lambda)\oplus$diag$(0^{k})$ can be embedded in diag$(1^{n-1},\lambda)$ if and only if $k \leq \frac{n-1}{2}$,
    \item diag$(p)\oplus$diag$(0^{k})$ can be embedded in diag$(1^{n-1},\lambda)$ if and only if $k \leq \frac{n-2}{2}$,
    \item diag$(\lambda p)\oplus$diag$(0^{k})$ can be embedded in diag$(1^{n-1},\lambda)$ if and only if $k \leq \frac{n-2}{2}$,
    \item diag$(1,1)\oplus$diag$(0^{k})$ can be embedded in diag$(1^{n-1},\lambda)$ if and only if $k \leq \frac{n-3}{2}$,
    \item diag$(1,\lambda )\oplus$diag$(0^{k})$ can be embedded in diag$(1^{n-1},\lambda)$ if and only if $k \leq \frac{n-2}{2}$,
    \item diag$(1,p)\oplus$diag$(0^{k})$ can be embedded in diag$(1^{n-1},\lambda)$ if and only if $k \leq \frac{n-3}{2}$,
    \item diag$(1 ,\lambda p)\oplus$diag$(0^{k})$ can be embedded in diag$(1^{n-1},\lambda)$ if and only if $k \leq \frac{n-3}{2}$,
    \item diag$(\lambda,p)\oplus$diag$(0^{k})$ can be embedded in diag$(1^{n-1},\lambda)$ if and only if $k \leq \frac{n-4}{2}$,
    \item diag$(\lambda, \lambda p)\oplus$diag$(0^{k})$ can be embedded in diag$(1^{n-1},\lambda)$ if and only if $k \leq \frac{n-4}{2}$,
    \item diag$(1, \lambda  p)\oplus$diag$(0^{k})$ can be embedded in diag$(1^{n-1},\lambda)$ if and only if $k \leq \frac{n-5}{2}$,
    \item diag$(1,1,1)\oplus$diag$(0^{k})$ can be embedded in diag$(1^{n-1},\lambda)$ if and only if $k \leq \frac{n-4}{2}$, 
    \item diag$(1,p,\lambda p)\oplus$diag$(0^{k})$ can be embedded in diag$(1^{n-1},\lambda)$ if and only if $k \leq \frac{n-3}{2}$,
    \item diag$(1,1,\lambda p )\oplus$diag$(0^{k})$ can be embedded in diag$(1^{n-1},\lambda)$ if and only if $k \leq \frac{n-4}{2}$,
    \item diag$(1,1,p)\oplus$diag$(0^{k})$ can be embedded in diag$(1^{n-1},\lambda)$ if and only if $k \leq \frac{n-4}{2}$,
    \item diag$(p,p, p)\oplus$diag$(0^{k})$ can be embedded in diag$(1^{n-1},\lambda)$ if and only if $k \leq \frac{n-5}{2}$,
    \item diag$(1,\lambda ,p)\oplus$diag$(0^{k})$ can be embedded in diag$(1^{n-1},\lambda)$ if and only if $k \leq \frac{n-5}{2}$,
    \item diag$(p,p ,\lambda )\oplus$diag$(0^{k})$ can be embedded in diag$(1^{n-1},\lambda)$ if and only if $k \leq \frac{n-5}{2}$,
    \item diag$(1,p,p )\oplus$diag$(0^{k})$ can be embedded in diag$(1^{n-1},\lambda)$ if and only if $k \leq \frac{n-6}{2}$.
\end{itemize}

\smallskip
If $n$ is odd and $(n-2k-1)/2$ is odd, or if $n$ is even and $(n-k)/2$ is odd, then we have

\begin{itemize}
    \item diag$(1)\oplus$diag$(0^{k})$ can be embedded in diag$(1^{n-1},\lambda)$ if and only if $k \leq \frac{n-1}{2}$,
    \item diag$(\lambda)\oplus$diag$(0^{k})$ can be embedded in diag$(1^{n-1},\lambda)$ if and only if $k \leq \frac{n-2}{2}$,
    \item diag$(p)\oplus$diag$(0^{k})$ can be embedded in diag$(1^{n-1},\lambda)$ if and only if $k \leq \frac{n-3}{2}$,
    \item diag$(\lambda p)\oplus$diag$(0^{k})$ can be embedded in diag$(1^{n-1},\lambda)$ if and only if $k \leq \frac{n-3}{2}$,
    \item diag$(1,1)\oplus$diag$(0^{k})$ can be embedded in diag$(1^{n-1},\lambda)$ if and only if $k \leq \frac{n-2}{2}$,
    \item diag$(1,\lambda )\oplus$diag$(0^{k})$ can be embedded in diag$(1^{n-1},\lambda)$ if and only if $k \leq \frac{n-3}{2}$,
    \item diag$(\lambda,p)\oplus$diag$(0^{k})$ can be embedded in diag$(1^{n-1},\lambda)$ if and only if $k \leq \frac{n-3}{2}$,
    \item diag$(\lambda ,\lambda p)\oplus$diag$(0^{k})$ can be embedded in diag$(1^{n-1},\lambda)$ if and only if $k \leq \frac{n-3}{2}$,
    \item diag$(p,p)\oplus$diag$(0^{k})$ can be embedded in diag$(1^{n-1},\lambda)$ if and only if $k \leq \frac{n-4}{2}$,
    \item diag$(1, p)\oplus$diag$(0^{k})$ can be embedded in diag$(1^{n-1},\lambda)$ if and only if $k \leq \frac{n-4}{2}$,
    \item diag$(1, \lambda p p)\oplus$diag$(0^{k})$ can be embedded in diag$(1^{n-1},\lambda)$ if and only if $k \leq \frac{n-4}{2}$,
    \item diag$(1,1,1)\oplus$diag$(0^{k})$ can be embedded in diag$(1^{n-1},\lambda)$ if and only if $k \leq \frac{n-3}{2}$, 
    \item diag$(p,p,p)\oplus$diag$(0^{k})$ can be embedded in diag$(1^{n-1},\lambda)$ if and only if $k \leq \frac{n-4}{2}$,
    \item diag$(1,\lambda, p )\oplus$diag$(0^{k})$ can be embedded in diag$(1^{n-1},\lambda)$ if and only if $k \leq \frac{n-4}{2}$,
    \item diag$(1,p,\lambda p)\oplus$diag$(0^{k})$ can be embedded in diag$(1^{n-1},\lambda)$ if and only if $k \leq \frac{n-4}{2}$,
    \item diag$(1,p, p)\oplus$diag$(0^{k})$ can be embedded in diag$(1^{n-1},\lambda)$ if and only if $k \leq \frac{n-5}{2}$,
    \item diag$(1,1 ,p)\oplus$diag$(0^{k})$ can be embedded in diag$(1^{n-1},\lambda)$ if and only if $k \leq \frac{n-5}{2}$,
    \item diag$(1,1 ,\lambda p)\oplus$diag$(0^{k})$ can be embedded in diag$(1^{n-1},\lambda)$ if and only if $k \leq \frac{n-5}{2}$,
    \item diag$(p,p,\lambda )\oplus$diag$(0^{k})$ can be embedded in diag$(1^{n-1},\lambda)$ if and only if $k \leq \frac{n-6}{2}$.
\end{itemize}
\end{corollary}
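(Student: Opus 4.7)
The plan is to deduce this corollary directly from the preceding Theorem (the parity-reduction statement for $p \equiv 3 \pmod 4$) by combining it with the minimum-dimension existence results already laid out in the earlier paragraphs of Sections~3.3 and~3.4. Writing $A = \text{diag}(0^{k}) \oplus S$ with $S$ nondegenerate, the Theorem says that in the first parity regime ($n$ odd with $(n-2k-1)/2$ even, or $n$ even with $(n-2k)/2$ even) the embedding $A \hookrightarrow \text{diag}(1^{n-1},\lambda)$ is equivalent to $S \hookrightarrow \text{diag}(1^{n-2k-1},\lambda)$, and in the second parity regime it is equivalent to $S \hookrightarrow \text{diag}(1^{n-2k})$. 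So the corollary reduces to a tabulation of the minimum $m$ for which each of the $4+7+8 = 19$ nondegenerate types $S$ embeds into $\text{diag}(1^{m-1},\lambda)$ (first block) and into $\text{diag}(1^{m})$ (second block), setting $m = n-2k$ and rearranging to $k \leq (n-m_{S})/2$.

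For the first block I would walk down the list and read off $m_{S}$ from Section~3.4. For example, $\text{diag}(\lambda)$ embeds already in $\text{diag}(\lambda)$, so $m_{S}=1$ and $k \leq (n-1)/2$; each of $\text{diag}(1)$, $\text{diag}(p)$, $\text{diag}(\lambda p)$ first appears in $\text{diag}(1,\lambda)$, giving $m_{S}=2$ and $k \leq (n-2)/2$; the $2$-dimensional types first appear at the dimensions established in the discussion (e.g.\ $\text{diag}(p,p)$ at $n=5$, giving $m_{S}=5$ and $k \leq (n-5)/2$); and similarly for the $3$-dimensional types where $\text{diag}(1,p,\lambda p)$, $\text{diag}(1,1,p)$, $\text{diag}(1,1,\lambda p)$ appear already at $n=4$ while $\text{diag}(1,p,p)$ only appears at $n=6$, yielding the claimed bounds $k \leq (n-3)/2, (n-4)/2, (n-4)/2, (n-6)/2$ respectively. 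The second block is handled identically, using instead the minimum Euclidean embedding dimensions from Section~3.3: $\text{diag}(1)$ needs $m\geq 1$, $\text{diag}(\lambda)$ needs $m\geq 2$, $\text{diag}(p)$ and $\text{diag}(\lambda p)$ need $m\geq 3$, $\text{diag}(p,p)$ and $\text{diag}(1,p)$, $\text{diag}(1,\lambda p)$ need $m\geq 4$, and so on down to $\text{diag}(p,p,\lambda)$ at $m\geq 6$.

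The only nontrivial consistency check is that the auxiliary constraint ``$k \leq (n-2)/2$ when $n=4l$'' appearing in the Theorem does not produce any extra constraint beyond those already obtained: for each $S$ in the list one has $m_{S} \geq 2$, so the inequality $k \leq (n-m_{S})/2$ already implies $k \leq (n-2)/2$, and the side condition is absorbed. The main obstacle is thus not conceptual but bookkeeping: one must correctly match each of the $19$ types with the right minimum-dimension datum, and then confirm that the parity hypothesis of the Theorem is consistent with the target $m = n-2k$ on both sides of the equivalence. Once these two passes are done, the case-by-case inequalities in the two halves of the corollary follow by algebra.
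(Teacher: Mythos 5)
Your proposal is exactly the paper's (implicit) argument: the corollary is a direct tabulation obtained by feeding the minimum embedding dimensions worked out earlier in Sections~3.3 and~3.4 into the two parity regimes of the preceding theorem via $m=n-2k$. Two small bookkeeping points to fix on the second pass you already anticipate: the bound $k\leq\frac{n-3}{2}$ for diag$(1,p,\lambda p)$ comes from the fact that this form is isometric to diag$(1,1,\lambda)$ and hence embeds at $m=3$, not $m=4$ as your prose suggests; and your claim that $m_{S}\geq 2$ absorbs the side condition fails for diag$(\lambda)$ (respectively diag$(1)$), where the absorption instead follows because $k\leq\frac{n-1}{2}$ with $k$ an integer and $n$ even already forces $k\leq\frac{n-2}{2}$.
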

Notice that any nondegenerate quadratic space of dimension $k$ is isometrically isomorphic to diag$(1^{k-3})\oplus Q$, where $Q$ is a $3$-dimensional nondegenerate quadratic space. At this point, we have all embedding conditions for nondegenerate and degenerate quadratic spaces. Therefore, given any quadratic space $V$, we can determine if $V$ can be embedded in diag$(1^{n-1},\lambda)$ when $p \equiv 3$ mod $4$.

\medskip

\end{document}